\newcommand{\bN}{\mathbf{N}}
\newcommand{\comp}{\rm comp}
\newtheorem{remark}[theorem]{Remark}
\date{\today}
\title{The infinite bi-Lanczos method for nonlinear eigenvalue problems \thanks{% (Don't forget to check Michiel's reference: \samethanks[2] --> \samethangs[1] )
Version \today.}}
\author{Sarah W.~Gaaf\thanks{Department of Mathematics and Computer Science,
TU Eindhoven,
PO Box 513,
5600 MB Eindhoven,
The Netherlands,
\texttt{s.w.gaaf@tue.nl}.
This author is supported by a Vidi research grant from the
Netherlands Organisation for Scientific Research (NWO).}
\and Elias Jarlebring\thanks{Department of Mathematics, Royal Institute of Technology (KTH),
Stockholm, SeRC Swedish e-Science Research Center, \texttt{eliasj@kth.se}.
}
}
\begin{document}

\maketitle
\begin{abstract}
We propose a two-sided Lanczos method for the nonlinear eigenvalue problem (NEP).
This two-sided approach provides approximations to both the right and
left eigenvectors of the eigenvalues of interest.
The method implicitly works with matrices and vectors with infinite size,
but because particular (starting) vectors are used, all computations
can be carried out efficiently with finite matrices and vectors.
We specifically introduce a new way to represent infinite vectors that span the subspace corresponding to the conjugate transpose operation for approximating the left eigenvectors.
Furthermore, we show that also in this infinite-dimensional interpretation the short recurrences inherent to the Lanczos procedure offer an efficient
algorithm regarding both the computational cost and the storage.
\end{abstract}

\begin{keywords}
Nonlinear eigenvalue problem,
two-sided Lanczos method
infinite bi-Lanczos method,
infinite two-sided Lanczos method.
\end{keywords}

\begin{AMS}
65F15, 65H17, 65F50
\end{AMS}

\section{Introduction}\label{sec:intro}
Let $M:\CC\rightarrow\CC^{n\times n}$ be a matrix
depending on a parameter with elements that are analytic in $\rho\bar\DD$,
where $\rho>0$ a constant, $\DD$ is the open unit disk and $\bar\DD$ its closure.
We present a new method for the nonlinear eigenvalue problem:
find $(\lambda,x,y)\in \rho\DD\times\CC^n\times\CC^n$, where $x \neq 0$, $y\neq 0$, such that
\begin{subequations}\label{eq:nep}
\begin{align}
M(\lambda) \, x &= 0\label{eq:nepa}\\
M(\lambda)^*y &= 0\label{eq:nepb}.
\end{align}
\end{subequations}
We are interested in both the left and the right eigenvectors of the problem.
The simultaneous approximation of both left and right eigenvectors
is useful, e.g., in the estimation of the eigenvalue condition number
%and the vectors can be used as initial values for two-sided correction methods \cite{Schreiber:2008:PHD}.
and the vectors can be used as initial values for locally convergent two-sided iterative methods, e.g., those described in \cite{Schreiber:2008:PHD}.
The NEP \eqref{eq:nep} has received considerable attention
in the numerical linear algebra community, and there are several
competitive numerical methods. There are for instance,
so-called single vector methods such as Newton type methods \cite{Schreiber:2008:PHD,Szyld:2012:LOCAL,Effenberger:2013:ROBUST},
which often can be improved with subspace acceleration, see \cite{Voss:2004:ARNOLDI}, and
Jacobi--Davidson methods \cite{Betcke:2004:JD}. These have been extended in a block sense \cite{Kressner:2009:BLOCKNEWTON}.
There are methods specialized for symmetric problems that
have an (easily computed) Rayleigh functional \cite{Szyld:2016:PRECOND_I}.
There is also a recent class of methods which can be interpreted as either dynamically
extending an approximation or carrying out an infinite-dimensional
algorithm, see for instance  \cite{Jarlebring:2012:INFARNOLDI,VanBeeumen:2013:RATIONAL,Guttel:2014:NLEIGS} and
references therein.
For recent developments see the summary papers \cite{Ruhe:1973:NLEVP,Mehrmann:2004:NLEVP,Voss:2013:NEPCHAPTER} and
the benchmark collection \cite{Betcke:2013:NLEVPCOLL}.

We propose a new method that is based on the two-sided Lanczos method for non-Hermitian problems.
An intuitive derivation of the main idea of this paper is the following.
Suppose $(\lambda, x)$ is a solution to \eqref{eq:nepa}.
By adding trivial identities we have an equality
between vectors of infinite length (cf. \cite{Jarlebring:2012:INFARNOLDI})

\begin{equation}\label{eq:first_infdim_eig0}
{\scriptstyle 
  \begin{bmatrix}
    -M(0)&&& \\
       & I && \\
       && I & \\
       &&& \ddots
  \end{bmatrix}
  \begin{bmatrix}
    \tfrac{\lambda^0}{0!}x\\[0.5mm]
    \tfrac{\lambda^1}{1!}x\\[0.5mm]
    \tfrac{\lambda^2}{2!}x\\[0.5mm]
  \vdots
  \end{bmatrix}
=
  \lambda
\begin{bmatrix}
    \tfrac{1}{1}M'(0)&\tfrac{1}{2}M''(0)& \tfrac{1}{3}M^{(3)}(0)& \cdots\\
    \tfrac{1}{1}I & & & \\
    & \tfrac{1}{2}I & &  \\
    & & \tfrac{1}{3}I &  \\
    & & & \ddots  \\
\end{bmatrix}
  \begin{bmatrix}
    \tfrac{\lambda^0}{0!}x\\[0.5mm]
    \tfrac{\lambda^1}{1!}x\\[0.5mm]
    \tfrac{\lambda^2}{2!}x\\[0.5mm]
  \vdots
  \end{bmatrix}
  }
  .
\end{equation}
Here, $I$ is the $n \times n$ identity matrix. One variant 
of the infinite Arnoldi method \cite{Jarlebring:2012:INFARNOLDI}
is based on carrying out Arnoldi's method on the infinite-dimensional
system \eqref{eq:first_infdim_eig0}. Our approach here is based on the
two-sided Lanczos and requires analysis also of the transposed matrix.
%Note that we used the Taylor series of $\eqref{eq:nepa}$, but another choice of expansion 
%{\color{red} (do we want to add an example?)} can be made to fit the (convergence) 
%properties of the problem in a better way.
Throughout the paper we assume that $0$ is not an eigenvalue, so that $M(0)^{-1}$ exists.
(This does not represent a loss of generality, as we can apply
a shift in case 0 is an eigenvalue.)
Let $\bN \in\CC^{n\times\infty}$ be defined by
\begin{align*}
\bN & := \begin{bmatrix} N_1 & N_2 & N_3 & \cdots \ \end{bmatrix} \\
& := \begin{bmatrix}
-M(0)^{-1} M'(0) & -\tfrac{1}{2} M(0)^{-1} M''(0) & -\tfrac{1}{3} M(0)^{-1} M^{(3)}(0) & \cdots \
\end{bmatrix}
\end{align*}
and define a vector of infinite length
$\bv:=[v_j]_{j=1}^\infty=[\tfrac{\lambda^{(j-1)}}{(j-1)!}x]_{j=1}^\infty$, where $v_j \in \mathbb{C}^n$ for $j = 1,2, \ldots$.
Relation \eqref{eq:first_infdim_eig0} can now
be more compactly expressed as
\begin{equation}\label{eq:first_infdim_eig}
\bv=\lambda \, (\be_1\otimes \bN+\bS\otimes I) \, \bv, \qquad \text{where}
\quad
\bS := \left[\begin{array}{cccc} 0 & 0 & 0 & \cdots \\ \tfrac{1}{1} &&& \\ & \tfrac{1}{2} && \\ && \tfrac{1}{3} & \\ &&&\ddots \end{array}\right],
\end{equation}
and $\be_1 = \begin{bmatrix} 1 \ 0 \ 0 \ \cdots \ \end{bmatrix}^T$ is the first basis vector.
Equations~\eqref{eq:first_infdim_eig0} and \eqref{eq:first_infdim_eig} may be viewed as a
companion linearization for the nonlinear eigenvalue problem.
Note that a solution $\lambda$ to \eqref{eq:first_infdim_eig} corresponds
to a reciprocal eigenvalue of the infinite-dimensional matrix
\begin{equation}
\label{eq:bA_def}
\bA := \be_1\otimes \bN +\bS\otimes I.%;
\end{equation}
The derivation of our new two-sided Lanczos procedure is based on applying
the Lanczos method (for non-Hermitian problems) to the infinite-dimensional matrix $\bA$.
The method builds two bi-orthogonal subspaces using short recurrences.
One subspace serves the approximation of right eigenvectors,
the other the approximation of the left eigenvectors.
In Section~\ref{sec:infdim} we use that, analogously to companion linearizations for polynomial
eigenvalue problems, relation \eqref{eq:first_infdim_eig} is equivalent to \eqref{eq:nepa},
which has been used in \cite{Jarlebring:2012:INFARNOLDI}. % to derive one version of the infinite Arnoldi method.
For the approximation of solutions to \eqref{eq:nepb} we derive a new and more involved
relationship for the left eigenvectors, also presented in Section~\ref{sec:infdim}.
This leads to a new way to represent infinite vectors that span the subspace corresponding to the conjugate transpose operation for approximating the left eigenvectors.
With two particular types of (starting) vectors, we can carry out an algorithm for the infinite-dimensional operator $\bA$ using only finite arithmetic.
This is covered in the first four subsections of Section~\ref{sec:bilanczos}, where we also treat the computation of
scalar products and matrix-vector products for the infinite-dimensional case. 
The second half of Section~\ref{sec:bilanczos} is dedicated to various computational issues and complexity considerations.
In Section~\ref{sec:numexpes} we present a few examples to illustrate the performance of the new method,
and we conclude with a short discussion.

Throughout this paper we use bold symbols to indicate matrices or vectors of
infinite dimensions, i.e., an infinite matrix is denoted by $\bA\in\CC^{\infty\times\infty}$,
and an infinite-dimensional vector is denoted by $\bx \in\CC^{\infty}$.
Unless otherwise stated, the $n$-length blocks of a vector of infinite length are denoted with subscript,
e.g., $\bw=[w_1^T,w_2^T,\ldots \ ]^T$ where $w_j\in\CC^n$ for $j\ge1$.

\section{Infinite dimensional reformulation}\label{sec:infdim}

In our formalization of the operator $\bA$ we first need to define its domain.
This is necessary to prove equivalence between $(\lambda,x,y)$
which is a solution to \eqref{eq:nep} and the eigentriplet $(\mu, \bv, \bw)$ of $\bA$,
where  $\mu = \lambda^{-1}$.
Let $\|\cdot\|$ denote the 2-norm. It will turn out to be natural to define the operators on a weighted, mixed 1-norm and 2-norm space defined by
\begin{align}
  \ell_{1}(\rho)&:=\Big\{\bw=[w_j]_{j=1}^\infty \in\CC^\infty:
\sum_{j=1}^\infty \tfrac{\rho^j}{j!} \, \|w_j\|
< \infty\Big\}.
\end{align}
Note that some vectors in $\ell_1(\rho)$ correspond to sequences
of vectors which are unbounded, i.e., $\|w_j\|\rightarrow\infty$ as $j\rightarrow\infty$,
but do not grow arbitrarily fast, since $\bw\in\ell_1(\rho)$ implies that
\begin{equation}  \label{eq:growbound}
  \tfrac{\rho^j}{j!} \, \|w_j\|\rightarrow 0 \quad \textrm{ as }j\rightarrow\infty.
\end{equation}
In the proofs of the theorems and propositions below we need to allow the vectors to have this growth,
to accommodate the fact that derivatives of analytic functions are not necessarily bounded.
We will let $\rho$ be the convergence radius of the power series expansion of the
analytic function $M$, and set
$
\DDD(\bA)=\DDD(\bA^*)=\ell_1(\rho)
$
as the domain of the operator.
The following two theorems do not only show the equivalence between the nonlinear
eigenvalue problem and the operator $\bA$, but also reveal the structure
of the left and right eigenvectors of $\bA$.
The first result is an adaption of \cite[Thm.~1]{Jarlebring:2012:INFARNOLDI} for our discrete operator and only assuming a finite convergence radius.
\begin{theorem}[Right eigenvectors of $\bA$]\label{thm:righteigvec}
Suppose $M$ is analytic in $\lambda\in \rho\bar\DD$
and let $\bA$ be defined by \eqref{eq:bA_def}.
\begin{itemize}
  \item[(i)] If $(\mu,\bv)\in \CC\times\DDD(\bA)\backslash\{0\}$ is an eigenpair of $\bA$ and $\lambda=\mu^{-1}\in \rho\DD$,
then there exists a vector $x\in\CC^n$ such that
\begin{equation}  \label{eq:bv_equiv}
\bv=\left[\tfrac{\lambda^{j-1}}{(j-1)!} \, x\right]_{j=1}^\infty.
\end{equation}
  \item[(ii)] The pair $(\lambda,x)\in \rho\DD\backslash\{0\}\times \CC^n\backslash\{0\}$
is a solution to \eqref{eq:nepa}
if and only if the pair
$\left(\lambda^{-1},\bv\right)\in
(\CC\backslash \rho^{-1}\bar\DD) \times \DDD(\bA)$
is an eigenpair of $\bA$, where $\bv$ is given by \eqref{eq:bv_equiv}.
\end{itemize}
\end{theorem}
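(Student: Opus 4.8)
The plan is to treat the two statements together by first extracting the structural consequences of the eigenvalue equation $\bA\bv=\mu\bv$ block by block, and then identifying the resulting top-block equation with the NEP via a Taylor-series identity. Writing $\bA=\be_1\otimes\bN+\bS\otimes I$ componentwise, the eigenvalue equation splits into one ``top'' equation $\sum_{j\ge1}N_jv_j=\mu v_1$ coming from $\be_1\otimes\bN$, and the shift equations $\tfrac{1}{k-1}v_{k-1}=\mu v_k$ for $k\ge2$ coming from the subdiagonal $\bS\otimes I$.

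First I would prove (i). The shift equations are purely algebraic and need no convergence analysis: observing that $\mu\ne0$ (otherwise they force $v_1=v_2=\cdots=0$, contradicting $\bv\ne0$), I set $\lambda=\mu^{-1}$ and $x:=v_1$, rewrite the recurrence as $v_k=\tfrac{\lambda}{k-1}v_{k-1}$, and solve it by induction to get $v_k=\tfrac{\lambda^{k-1}}{(k-1)!}x$, which is exactly \eqref{eq:bv_equiv}. This already pins down the shape of every eigenvector of $\bA$.

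Next, for (ii), I would substitute the form \eqref{eq:bv_equiv}---supplied by (i) when starting from an eigenpair, and by direct construction when starting from a NEP solution---into the top equation. Using $N_j=-\tfrac1jM(0)^{-1}M^{(j)}(0)$ and $\tfrac1j\cdot\tfrac1{(j-1)!}=\tfrac1{j!}$, the top equation becomes $-M(0)^{-1}\sum_{j\ge1}\tfrac{\lambda^{j-1}}{j!}M^{(j)}(0)x=\lambda^{-1}x$. Multiplying through by $-\lambda M(0)$ and recognizing $\sum_{j\ge1}\tfrac{\lambda^j}{j!}M^{(j)}(0)=M(\lambda)-M(0)$ (the Taylor expansion of $M$ about $0$), this collapses to $M(\lambda)x=0$. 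Since every step is an equivalence and the shift equations hold automatically for vectors of the form \eqref{eq:bv_equiv}, the biconditional follows in both directions. The remaining bookkeeping is immediate from $\lambda=\mu^{-1}$: one has $x\ne0\Leftrightarrow\bv\ne0$, and $\lambda\in\rho\DD\setminus\{0\}\Leftrightarrow\mu=\lambda^{-1}\in\CC\setminus\rho^{-1}\bar\DD$.

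I expect the main obstacle to be analytic rather than algebraic: justifying that the top series $\sum_jN_jv_j$ converges, that $\bv$ indeed lies in $\DDD(\bA)=\ell_1(\rho)$, and that the rearrangement into $M(\lambda)-M(0)$ is legitimate. Here I would invoke Cauchy's estimates for $M$ on $\rho\bar\DD$ to bound $\|M^{(j)}(0)\|\le C\,j!\,\rho^{-j}$, so that $\|N_jv_j\|\lesssim(|\lambda|/\rho)^{j-1}\|x\|$ for the eigenvector form; the hypothesis $|\lambda|<\rho$ then yields geometric decay, guaranteeing absolute convergence, membership in $\ell_1(\rho)$, and validity of the reordering. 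This convergence bookkeeping, together with checking that the assumption $\bv\in\DDD(\bA)$ makes the top equation meaningful in direction (i), should be the only genuinely delicate part of the argument.
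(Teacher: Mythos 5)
Your proposal is correct and follows essentially the same route as the paper: the lower block rows of the eigenvalue equation give the recurrence $v_{j+1}=\tfrac{\lambda}{j}v_j$, which is solved by induction to yield \eqref{eq:bv_equiv}, and the first block row is then identified with $-M(0)^{-1}(M(\lambda)-M(0))x=x$, i.e., $M(\lambda)x=0$, via the Taylor expansion of $M$ about $0$. The only (minor) difference is that you spell out the Cauchy-estimate justification for convergence and membership in $\ell_1(\rho)$, which the paper dispatches more tersely by noting that $\|v_j\|=\tfrac{|\lambda|^{j-1}}{(j-1)!}\|v_1\|$ decays factorially.
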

\begin{proof}
To show (i), let $\bv=[v_j]_{j=1}^\infty$, where $v_j\in\CC^n$ are the blocks of $\bv$.
From the block rows $2,3,\ldots$ of $\lambda \bA \bv=\bv$, we have that $v_{j+1}=\tfrac{\lambda}{j} \, v_j$ for $j=1,2,\ldots$.
It follows from induction that the blocks in the eigenvector satisfy
\begin{equation}
  v_j=\tfrac{\lambda^{j-1}}{(j-1)!} \, v_1, \;\;j=1,2,\ldots.
\end{equation}
We also have that $\bv\in\ell_1(\rho)$ since $\|v_j\|=\tfrac{|\lambda|^{j-1}}{(j-1)!} \, \|v_1\|$
such that $\bv\in\ell_1\subset\ell_1(\rho)$.

To show (ii), assume first that $(1/\lambda,\bv)$ is an eigenpair of $\bA$.
From (i) we know that the blocks of $\bv$ satisfy $v_j=\tfrac{\lambda^{j-1}}{(j-1)!} \, v_1$.
The first block row of $\bv=\lambda\bA \bv$ implies that
\[
v_1
= \lambda \, \sum_{j=1}^{\infty} \, N_j \, \tfrac{\lambda^{j-1}}{(j-1)!} \, v_1
= -\sum_{j=1}^{\infty}\tfrac{\lambda^j}{j!} M(0)^{-1} \, M^{(j)}(0) \, v_1
= -M(0)^{-1} \left(M(\lambda)-M(0)\right)v_1.
\]
Therefore, since $0$ is not an eigenvalue, $(\lambda,v_1)$ is a solution to \eqref{eq:nepa}.

To show the converse, suppose that $(\lambda,x)$ is a solution to \eqref{eq:nepa}.
Let $\bv$ be as in \eqref{eq:bv_equiv}. The rest of the proof consists of showing that
\begin{equation}\label{eq:proof2}
\lambda \bA \bv = \bv.
\end{equation}
Similar to above, the first block row of $\lambda \bA \bv$ is
\[
-\lambda \, \displaystyle \sum_{j=1}^{\infty} \tfrac{1}{j} \, M(0)^{-1} \, M^{(j)}(0) \, v_j
    = -M(0)^{-1}M(\lambda) \, x + x = x.
\]
In the last step we used that $M(\lambda)x=0$,
since $(\lambda,x)$ is a solution to \eqref{eq:nepa}.
Hence, the equality in the first block row of \eqref{eq:proof2} is proven.
The equality in \eqref{eq:proof2} corresponding to blocks $j>1$ follows from
the fact that $v_{j+1}=(\lambda/j) \, v_j$, $j = 1, 2, \dots$, by construction.
\end{proof}

We now study the equivalence between a left eigenpair of the nonlinear eigenvalue problem
and a left eigenpair of $\bA$. Also, the structure of the left eigenvectors of $\bA$ will be concretized.

\begin{theorem}[Left eigenvectors of $\bA$]\label{thm:lefteigvec}
Suppose $M$ is analytic in $\lambda\in \rho\bar\DD$
and let $\bA^*$ be defined by \eqref{eq:bA_def}.
\begin{itemize}
  \item[(i)] If $(\mu,\bw)\in \CC\times\DDD(\bA^*)\backslash\{0\}$ is an eigenpair of $\bA^*$ and $\lambda=\mu^{-1}\in \rho\DD$,
then there exists a vector $z\in\CC^n$ such that
\begin{equation}  \label{eq:bw_equiv}
\bw=\sum_{j=1}^\infty (\bS^T\otimes I)^{j-1}\bN^*\lambda^j z.
\end{equation}
  \item[(ii)] The pair $(\lambda,y)\in \rho\DD\backslash\{0\}\times \CC^n\backslash\{0\}$
is a solution to \eqref{eq:nepb} if and only if the pair
$\left(\lambda^{-1},\bw\right)\in (\CC\backslash \rho^{-1}\bar\DD) \times \DDD(\bA^*)$ is an eigenpair
of $\bA^*$, where $\bw$ is given by \eqref{eq:bw_equiv} with $z=M(0)^*y$.
\end{itemize}
\end{theorem}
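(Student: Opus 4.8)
The plan is to mirror the structure of the proof of Theorem~\ref{thm:righteigvec}, now working with the adjoint. First I would record the block action of $\bA^*$. Since $(\be_1\otimes\bN)^*$ has $\bN^*=[N_1^*;N_2^*;\dots]$ as its only nonzero (first) block column, and $(\bS\otimes I)^*=\bS^T\otimes I$ is the upward shift for which $(\bS^T\otimes I)\bw$ carries $\tfrac1i w_{i+1}$ in block position $i$, the eigenrelation $\bA^*\bw=\mu\bw=\lambda^{-1}\bw$ reads, blockwise,
\[
N_i^*w_1+\tfrac1i w_{i+1}=\tfrac1\lambda w_i,\qquad i=1,2,\dots .
\]
This is the adjoint analogue of the subdiagonal recurrence used for $\bA$, except that here the coupling term $N_i^*w_1$ feeds the first block into every row, which is exactly what forces the more involved closed form \eqref{eq:bw_equiv} rather than the simple geometric blocks of \eqref{eq:bv_equiv}.

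For part~(i) I would rearrange the blockwise relation into the single operator fixed point $\bw=\lambda\,\bN^*w_1+\lambda\,(\bS^T\otimes I)\bw$ and iterate it. After $k$ substitutions one obtains the partial sum $\sum_{j=1}^k\lambda^j(\bS^T\otimes I)^{j-1}\bN^*w_1$ plus a remainder $\lambda^k(\bS^T\otimes I)^k\bw$. Setting $z:=w_1$ (which is nonzero, since $w_1=0$ would force $\bw=0$ through the recurrence), the claimed expression \eqref{eq:bw_equiv} is precisely the limit of these partial sums, so the entire content of part~(i) is to show that the remainder tends to $0$. Here I would use that $\bw\in\DDD(\bA^*)=\ell_1(\rho)$ together with the growth bound \eqref{eq:growbound}: the upward shift $\bS^T\otimes I$ both advances the block index and divides by it, so in the weighted norm each application of $\lambda(\bS^T\otimes I)$ contributes asymptotically a factor $|\lambda|/\rho<1$ and the tail decays. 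This is the step where the strict inclusion $\lambda\in\rho\DD$ (rather than $\rho\bar\DD$) is essential.

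For part~(ii) I would argue both implications through \eqref{eq:bw_equiv}. In the direction ``NEP $\Rightarrow$ eigenpair'' I would define $\bw$ by \eqref{eq:bw_equiv} with $z=M(0)^*y$, compute its blocks $w_k=\sum_{m\ge k}\tfrac{(k-1)!}{(m-1)!}\lambda^{m-k+1}N_m^*z$, and verify the telescoping identity $\tfrac1\lambda w_i-\tfrac1i w_{i+1}=N_i^*z$ by a direct index shift. Comparing with the blockwise eigenrelation above reduces the whole eigenproblem to the single family of conditions $N_i^*w_1=N_i^*z$ for all $i$, i.e.\ $\bN^*(w_1-z)=0$. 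Summing the series for $w_1$ and using $N_m^*=-\tfrac1m(M^{(m)}(0))^*M(0)^{-*}$ together with $z=M(0)^*y$ collapses $w_1-z$ into a single value of the transposed symbol $M(\cdot)^*$ applied to $y$, so that $\bN^*(w_1-z)=0$ is delivered by \eqref{eq:nepb}. The converse follows from part~(i): an eigenvector already has the form \eqref{eq:bw_equiv} with $z=w_1$, and since $M(0)$ is nonsingular I may set $y:=M(0)^{-*}w_1$ and read the first-block consistency condition as the vanishing of $M(\cdot)^*y$.

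The step I expect to be the main obstacle is the complex-conjugate bookkeeping in this last collapse. Because $(M(s))^*=\sum_m\tfrac{\bar s^{\,m}}{m!}(M^{(m)}(0))^*$ carries $\bar s$ rather than $s$, the series $\sum_m\tfrac{\lambda^m}{m!}(M^{(m)}(0))^*y$ generated by \eqref{eq:bw_equiv} equals a value of the symbol at the conjugate argument, so one must track carefully whether the relevant $\bA^*$-eigenvalue is $\lambda^{-1}$ or $\bar\lambda^{-1}$ in order to land on \eqref{eq:nepb} and not on a conjugated problem. The secondary technical point, already flagged in part~(i), is justifying the interchange of the infinite summation with the (unbounded) operator $\bS^T\otimes I$ and the vanishing of the Neumann tail purely from the weighted bound \eqref{eq:growbound}; this is precisely what keeps $\bw$ inside $\DDD(\bA^*)=\ell_1(\rho)$ and makes the formal manipulations rigorous.
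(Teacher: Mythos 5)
Your overall route is the paper's: the blockwise relation $N_i^*w_1+\tfrac1i w_{i+1}=\tfrac1\lambda w_i$, an iterated/telescoped identity whose remainder is killed by the $\ell_1(\rho)$ growth bound together with $|\lambda|<\rho$, and the reduction of the first-block condition to \eqref{eq:nepb} via $N_j^*=-\tfrac1j M^{(j)}(0)^*M(0)^{-*}$. The paper organizes (i) as an induction on the first block $w_1$ (keeping the remainder $\tfrac{\lambda^k}{k!}w_{k+1}$ explicit) followed by a second induction for the remaining blocks, and proves the backward direction of (ii) by expanding $\lambda\bA^*\bw$ directly as an operator series; your full-vector fixed-point iteration and your telescoping identity $\tfrac1\lambda w_i-\tfrac1i w_{i+1}=N_i^*z$ are equivalent reorganizations of the same computations and would go through. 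Your flag about conjugation is legitimate --- $\sum_j\tfrac{\lambda^j}{j!}M^{(j)}(0)^*$ equals $M(\bar\lambda)^*$, not $M(\lambda)^*$, for genuinely complex $\lambda$ --- but note the paper's own proof does not resolve this either; it implicitly reads \eqref{eq:nepb} as $\sum_{j\ge0}\tfrac{\lambda^j}{j!}M^{(j)}(0)^*y=0$.

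The one genuine gap is in the backward direction of (ii): the theorem asserts $\bw\in\DDD(\bA^*)=\ell_1(\rho)$, and this must be proved for the vector you construct from \eqref{eq:bw_equiv}. You cannot get it from \eqref{eq:growbound}, which is a \emph{consequence} of membership in $\ell_1(\rho)$, not a route to it; leaning on it for the constructed $\bw$ is circular. The paper supplies a separate estimate here: it bounds $\|w_k\|\le\sum_{j}\tfrac{|\lambda|^j(k-1)!}{(j+k-2)!}\|M^{(k+j-1)}(0)^*\|\,\|\widehat y\|$, invokes the Cauchy-type bound $\|M^{(j)}(0)\|\le M_\rho\, j!/\rho^j$ (this is where analyticity on $\rho\bar\DD$ is actually used), and sums the resulting double series using $\sum_{k\ge1}\tfrac{j+k-1}{k!}=(j-1)(e-1)+e$ and $|\lambda|<\rho$ to conclude $\sum_k\tfrac{\rho^k}{k!}\|w_k\|<\infty$. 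Without this step your $\bw$ is only a formal eigenvector and the equivalence claimed in (ii) is not fully established; the rest of your argument is sound.
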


\begin{proof}
Suppose $\lambda\bA^*\bw = \bw$, where $\bw\in\ell_1(\rho)$.
We use induction to show that
\begin{equation}\label{eq:w1_in_proof0}
w_1 = \sum_{j=1}^{k} \tfrac{\lambda^j}{(j-1)!}  N_j^* \, w_1 + \tfrac{\lambda^k}{k!} \, w_{k+1}
\end{equation}
for any $k$.
Relation~\eqref{eq:w1_in_proof0} is easy to see for $k=1$.
Suppose \eqref{eq:w1_in_proof0} is satisfied for $k-1$, i.e.,
\begin{equation}\label{eq:w1_in_proofN-1}
w_1 = \sum_{j=1}^{k-1} \tfrac{\lambda^j}{(j-1)!} N_j^* \, w_1 + \tfrac{\lambda^{k-1}}{(k-1)!} \, w_{k}.
\end{equation}
Block row $k$ of $\lambda\bA^*\bw = \bw$ reduces to
\begin{equation}\label{eq:Nth_row_in_proof}
\lambda \, N_k^* \, w_1 + \tfrac{\lambda}{k} \, w_{k+1} = w_k.
\end{equation}
The induction is completed by inserting \eqref{eq:Nth_row_in_proof} in
relation \eqref{eq:w1_in_proofN-1}, which yields \eqref{eq:w1_in_proof0}.
Due to the fact that $\bw\in\ell_1(\rho)$, \eqref{eq:growbound}
holds, and since $|\lambda|<\rho$, we have $\|\tfrac{\lambda^{k}}{k!}w_{k+1} \|<
\tfrac{\rho^k}{k!} \, \|w_{k+1}\|\rightarrow 0$
as
$k\rightarrow\infty$.
This implies that \eqref{eq:w1_in_proof0} holds also in the limit $k\rightarrow\infty$ and
\begin{equation}\label{eq:w1_in_proof_infty}
w_1 = \sum_{j=1}^{\infty} \tfrac{\lambda^j}{(j-1)!} N_j^* \, w_1
= (\be_1^T\otimes I) \Big(\sum_{j=1}^\infty\lambda^j(\bS^T\otimes I)^{j-1}\bN^* w_1\Big).
\end{equation}
In the last equality in \eqref{eq:w1_in_proof_infty} we used that
\begin{equation}\label{eq: Sj}
\bS^j \be_k = \tfrac{(k-1)!}{(j+k-1)!} \, \be_{k+j}
\end{equation}
and therefore
$(\be_k^T\otimes I) (\bS^T\otimes I)^{j-1}=
\be_k^T(\bS^T)^{j-1}\otimes I=\tfrac{(k-1)!}{(j+k-2)!}\be_{k+j-1}^*\otimes I$
for any $k$, as well as $(\be_j^T\otimes I)\bN^*= N_j^*$.
By setting $z=w_1$ we have with \eqref{eq:w1_in_proof_infty} proven the
first block row of \eqref{eq:bw_equiv}. The proof of the other rows follows from induction,
since assuming that $w_k=(\be_k^T\otimes I)\bw$, where $\bw$ is the right-hand
side of \eqref{eq:bw_equiv}, and using \eqref{eq:Nth_row_in_proof}
we find that $w_{k+1}=(\be_{k+1}^*\otimes I)\bw$.

To show (ii), first assume that $\bw\in\ell_1(\rho)$ satisfies
$\lambda \bA\bw=\bw$. This is the same assumption
as in (i) and therefore \eqref{eq:w1_in_proof_infty}
is satisfied. By setting $y=M(0)^{-*}z = M(0)^{-*}w_1$, we have
that $M(0)^*y=\sum_{j=1}^\infty M^{(j)}(0)^*y$, i.e., \eqref{eq:nepb}
is satisfied.

To show the backward implication in (ii), we now assume that $(\lambda,y)$ is a solution to \eqref{eq:nepb}.
Let $z=M(0)^{*}y$ and define a vector $\bw$ as \eqref{eq:bw_equiv}. Then
\begin{align*}
\lambda\bA^*\bw
&= \lambda\displaystyle\sum_{j=1}^{\infty}(\be_1^T\otimes \bN^*
    + \bS^T\otimes I )(\bS^T\otimes I)^{j-1}\bN^*\lambda^j z \\[1mm]
&= \lambda \bN^*\displaystyle\sum_{j=1}^{\infty} \tfrac{1}{(j-1)!} (\be_j^T \otimes I) \bN^*\lambda^{j} z
    + \lambda \displaystyle\sum_{j=1}^{\infty} (\bS^T\otimes I)^{j}\bN^*\lambda^{j} z \\[1mm]
&= \lambda \bN^* \displaystyle\sum_{j=1}^{\infty} \tfrac{-1}{j!} M^{(j)}(0)^* y
    + \displaystyle\sum_{j=2}^{\infty} (\bS^T\otimes I)^{j-1}\bN^* \lambda^{j} z \\[1mm]
&= \bN^* \lambda z + \displaystyle\sum_{j=2}^{\infty} (\bS^T\otimes I)^{j-1}\bN^* \lambda^{j} z
= \displaystyle\sum_{j=1}^{\infty} (\bS^T\otimes I)^{j-1}\bN^*\lambda^{j} z
= \bw.
\end{align*}
To show $\bw\in\ell_1(\rho)$ we now study the weighted $\ell_1$-norm,
\begin{align}
\sum_{k=1}^{\infty}\tfrac{\rho^k}{k!}\|w_k\| &
\le \sum_{k=1}^{\infty}\tfrac{\rho^k}{k!}\sum_{j=1}^\infty
\tfrac{|\lambda|^j(k-1)!}{(j+k-2)!}\|M^{(k+j-1)}(0)^*\|\|\widehat{y}\| \label{eq:wnorm_in_proof} \\
& \le \sum_{k=1}^{\infty}\tfrac{\rho^k}{k!}\sum_{j=1}^\infty
M_\rho\tfrac{|\lambda|^j(k-1)!(k+j-1)!}{(j+k-2)!\rho^{j+k-1}}\|\widehat{y}\|
 = \tfrac{M_\rho\|\widehat{y}\|}{r}
\sum_{j=1}^\infty\sum_{k=1}^\infty\tfrac{|\lambda|^j}{\rho^j}\tfrac{j+k-1}{k!}, \nonumber
\end{align}
where, since $M$ is analytic, there exists a constant $M_\rho$ such that
$\|M^{(j)}(0)\|\le M_\rho\tfrac{j!}{\rho^j}$.
Now note that Taylor expansion of $e^x$ gives the explicit expression
$\sum_{k=1}^\infty \tfrac{j+k-1}{k!}=(j-1)(e-1)+e$.
By combining this with \eqref{eq:wnorm_in_proof} and $|\lambda|<\rho$ we find that the
right-hand side of \eqref{eq:wnorm_in_proof} is
finite and therefore $\bw\in\ell_1(\rho)$.
\end{proof}

%-------------------------------------------------------------------------------------------------------------
%-------------------------------------------------------------------------------------------------------------

\section{Derivation of the infinite bi-Lanczos method}\label{sec:bilanczos}
The algorithm proposed in this paper is based on the Lanczos method for non-Hermitian eigenvalue
problems specified in \cite[Section~7.8.1]{Bai:2000:TEMPLATES}.
We first introduce the standard method
and then adapt the algorithm in such a way that it can be used for
the infinite-dimensional problem.
%Therefore we need to define various operations for vectors and matrices of infinite dimension and
%also the Krylov subspaces created by the algorithms are described.
\subsection{The bi-Lanczos method for standard eigenvalue problems}\label{sec:standardBiLan}
%The Lanczos method for non-Hermitian eigenvalue problems, also called the bi-Lanczos method,
%will be explained as in \cite[Section~7.8.1]{Bai:2000:TEMPLATES}, with a slight change of notation.
%We base our theory on the Lanczos method for non-Hermitian eigenvalue problems,
We now briefly summarize the version of the two-sided Lanczos
(also called the bi-Lanczos method) that we use in our derivation.
The method, presented in Algorithm $1$, uses an oblique projection building two bi-orthogonal subspaces
for the simultaneous approximation of left and right eigenvectors.
The short recurrences that are typical for this method lead to far less
storage requirements with respect to orthogonal projection methods for the same problem.
However, as is well known, the method suffers from the loss of bi-orthogonality in finite precision arithmetic.
One can either accept the loss and take more steps, or, if desired, one can re-biorthogonalize all vectors in each iteration. A compromise between these two options is to maintain semiduality as proposed in \cite{Day:1997:Lanczos}. 
For information on various types of breakdowns, how to continue after a breakdown, and how to detect (near) breakdowns, we refer to \cite[Section~7.8.1]{Bai:2000:TEMPLATES}, 
and \cite{Freund:1993:Implementation}.
\begin{table}[htbp!]
{\footnotesize
\noindent \vrule height 0pt depth 0.5pt width \textwidth \\
{\bfseries Algorithm 1:} Bi-Lanczos \\[-3mm]
\vrule height 0pt depth 0.3pt width \textwidth \\
{\bf Input: } Vectors $q_1$, $\til{q}_1$, with $\til{q}_1^* \, q_1 = 1$, $\gamma_1 = \beta_1 = 0$, $q_0 = \til{q}_0 = 0$.\\
{\bf Output: }
Approximate eigentriplets $(\theta_i^{(j)}, x_i^{(j)}, y_i^{(j)})$ of $A$.
\\[-2mm]
\vrule height 0pt depth 0.3pt width \textwidth \\[1mm]
{\bf for }$j=1,2,\ldots,$ until convergence\\
\phantom{M1} (1) \quad $r = Aq_j$\\
\phantom{M1} (2) \quad $s = A^*\til{q}_j$\\
\phantom{M1} (3) \quad $r := r - \gamma_j \, q_{j-1}$\\
\phantom{M1} (4) \quad $s := s - \bar{\beta}_j \, \til{q}_{j-1}$\\
\phantom{M1} (5) \quad $\alpha_j = \til{q}_j^*r$\\
\phantom{M1} (6) \quad $r := r - \alpha_j \, q_j$ \\
\phantom{M1} (7) \quad $s := s - \bar{\alpha}_j \, \til{q}_j$\\
\phantom{M1} (8) \quad $\omega_j = r^*s$\\
\phantom{M1} (9) \quad $\beta_{j+1} = |\omega_j|^{1/2}$ \\
\phantom{M} (10) \quad $\gamma_{j+1} = \bar{\omega}_j / \beta_{j+1}$ \\
\phantom{M} (11) \quad $q_{j+1} = r / \beta_{j+1}$ \\
\phantom{M} (12) \quad $\til{q}_{j+1} = s / \bar{\gamma}_{j+1}$ \\
\phantom{M} (13) \quad Compute eigentriplets $(\theta_i^{(j)}, z_i^{(j)}, \til{z}_i^{(j)})$ of $T_j$. \\
\phantom{M} (14) \quad Test for convergence. \\
\phantom{M} (15) \quad Rebiorthogonalize if necessary. \\
{\bf end}\\
(16) Compute approximate eigenvectors $x_i^{(j)} = Q_jz_i^{(j)}$, $y_i^{(j)} = \til{Q}_j\til{z}_i^{(j)}$.\\[-2mm]
\vrule height 0pt depth 0.3pt width \textwidth
}
\end{table}
After $k$ iterations we obtain the relations:
\begin{align*}
AQ_k            & = Q_kT_k + \beta_{k+1}q_{k+1}e_k^T,\\[1mm]
A^*\!\til{Q}_k  & = \til{Q}_kT^*_k + \bar{\gamma}_{k+1}\til{q}_{k+1}e_k^T,\\[1mm]
\til{Q}_k^*Q_k  &= I_k,
\end{align*}
where for $i=1,\ldots,k$ the columns of $Q_k$ are equal to the vectors $q_i$,
%the columns of $\til{Q}_k$ are the vectors $\til{q}_i$,
and $\til{q}_i$ are the columns of $\til{Q}_k$,
$e_k$ is the $k$th unit vector,
and the tridiagonal matrix $T_k$ is defined as
\[
T_k = \left[\begin{array}{ccccc}
\alpha_1    & \gamma_2  &         &          \\
\beta_2     & \alpha_2  & \ddots  &          \\
            & \ddots    & \ddots  & \gamma_k \\
            &           & \beta_k & \alpha_k
\end{array}\right].
\]
Furthermore, the relations $\til{q}_{k+1}^*Q_{k}=0$ and $\til{Q}_{k}^*q_{k+1}=0$ hold.
After $k$ iterations, one can compute the eigentriplets $(\theta_i^{(k)}, z_i^{(k)}, \til{z}_i^{(k)})$,
$i = 1,2,\ldots,k$, of $T_k$.
%, satisfying the following equations:
%\begin{align*}
%T_j z_i^{(j)}           & = \theta_i^{(j)} z_i^{(j)},\\[1mm]
%(\til{z}_i^{(j)})^*T_j  & = \theta_i^{(j)} (\til{z}_i^{(j)})^*.
%\end{align*}
The Ritz values $\theta_i^{(k)}$ are the approximate eigenvalues of $A$,
and the corresponding right and left Ritz vectors are
$x_i^{(k)} = Q_kz_i^{(k)}$ and $y_i^{(k)} = \til{Q}_k\til{z}_i^{(k)}$, respectively.

\subsection{Krylov subspace and infinite-dimensional vector representations}
For our infinite-dimensional problem where we work with the matrix $\bA$ and vectors of infinite length, we need to build infinite-dimensional Krylov spaces,
$\mathcal{K}_k(\bA,\bx)$ and $\mathcal{K}_k(\bA^*,\til{\by})$, for some starting vectors $\bx$ and $\til{\by}$ of infinite length.
To adapt the algorithm to the infinite-dimensional problem we have to address the issue of storing vectors with infinite length.
By choosing the starting vectors carefully we will be able to store only a finite number of vectors of
length $n$.
The Krylov subspaces will contain vectors that are consistent with eigenvector approximations.
\begin{proposition}\label{prop:krylovstruc}
Suppose $\bx=\be_1\otimes x_1$ and $\til{\by}=\bN^* \til{y}_1$, where $x_1, \til{y}_1\in\CC^n$.
\begin{enumerate}
\item[(a)]
For any $k\in\NN$, $\displaystyle \bA^k \bx=\sum_{j=1}^{k+1} (\be_j\otimes z_{k-j+1})$,
where $z_0 = \tfrac{1}{k!}x_1$ and for \\ $i~\in~\{1,\ldots, k\}$ $z_i$ is given by the recursion
$\displaystyle z_i = \displaystyle\sum_{\ell=1}^i \tfrac{(k-i+\ell)!}{(\ell-1)!(k-i)!}N_{\ell} z_{i-\ell}$.
\item[(b)] For any $k\in\NN$, $\displaystyle(\bA^*)^k\til{\by}=\sum_{j=1}^{k+1}(\bS^T \otimes I)^{j-1}\bN^* \til{z}_{k-j+1}$,
where $\til{z}_0 = \til{y}_1$ and for $i\in\{1,\ldots, k\}$ $\til{z}_i$ is given by the recurrence relation
$\displaystyle \til{z}_i = \sum_{\ell=1}^i \tfrac{1}{(\ell-1)!}N^*_{\ell}\til{z}_{i-\ell}$.
\end{enumerate}
\end{proposition}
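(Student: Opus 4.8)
The plan is to prove both parts by induction on $k$, in each case first working out the block action of the operator on the relevant structured vector and then matching the outcome term by term against the claimed formula. The only tools I would need are the two elementary facts already available in the excerpt: the shift identity \eqref{eq: Sj} (in particular $\bS\be_j=\tfrac1j\be_{j+1}$ and $\bS^{j-1}\be_1=\tfrac{1}{(j-1)!}\be_j$), and the projection $(\be_j^T\otimes I)\bN^*=N_j^*$ used in the proof of Theorem~\ref{thm:lefteigvec}. The base cases $k=0$ reduce to $\bx=\be_1\otimes x_1$ and $\til\by=\bN^*\til y_1$, which match both formulas with $z_0=x_1$, $\til z_0=\til y_1$.

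For part (b) I would start from the block action $(\bA^*\bu)_m=N_m^*u_1+\tfrac1m u_{m+1}$, which follows from $\bA^*=\be_1^T\otimes\bN^*+\bS^T\otimes I$. Assuming the formula for exponent $k$, I would apply $\bA^*$ to each summand $(\bS^T\otimes I)^{j-1}\bN^*\til z_{k-j+1}$ and split the result into $(\be_1^T\otimes\bN^*)(\bS^T\otimes I)^{j-1}\bN^*\til z_{k-j+1}$ and $(\bS^T\otimes I)^{j}\bN^*\til z_{k-j+1}$. The second piece merely raises the power of $\bS^T\otimes I$, so after reindexing $j\mapsto j+1$ it reproduces exactly the summands with index $\ge 2$ for exponent $k+1$. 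For the first piece the key computation is that the leading block of $(\bS^T\otimes I)^{j-1}\bN^*\til z$ equals $\tfrac{1}{(j-1)!}N_j^*\til z$ (this is $\bS^{j-1}\be_1=\tfrac{1}{(j-1)!}\be_j$ combined with $(\be_j^T\otimes I)\bN^*=N_j^*$); summing over $j$ gives $\bN^*\sum_{j=1}^{k+1}\tfrac{1}{(j-1)!}N_j^*\til z_{k+1-j}=\bN^*\til z_{k+1}$ by the defining recurrence, i.e.\ the missing index-$1$ summand. Since the recurrence for $\til z_i$ does not depend on $k$, the induction closes immediately.

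Part (a) is where I expect the real work, precisely because the sequence $z_i$ carries a $k$-dependence through the factor $(k-i+\ell)!/(k-i)!$. The block action here is $(\bA\bv)_1=\sum_m N_m v_m$ and $(\bA\bv)_{m+1}=\tfrac1m v_m$. Writing $c^{(k)}_m$ for the $m$-th block of $\bA^k\bx$, the shift relation $c^{(k+1)}_{m+1}=\tfrac1m c^{(k)}_m$ iterates to $c^{(k)}_m=\tfrac{1}{(m-1)!}c^{(k-m+1)}_1$, which reduces everything to the first blocks; these obey the clean, $k$-independent recurrence $c^{(p)}_1=\sum_{\ell=1}^p\tfrac{1}{(\ell-1)!}N_\ell c^{(p-\ell)}_1$ with $c^{(0)}_1=x_1$. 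The remaining task is then purely algebraic: verify that the closed form $z^{(k)}_i:=\tfrac{1}{(k-i)!}c^{(i)}_1$ satisfies the stated recurrence for $z_i$, which, after substituting $z^{(k)}_{i-\ell}=\tfrac{1}{(k-i+\ell)!}c^{(i-\ell)}_1$, comes down to the single cancellation $\tfrac{(k-i+\ell)!}{(\ell-1)!(k-i)!}\cdot\tfrac{1}{(k-i+\ell)!}=\tfrac{1}{(\ell-1)!(k-i)!}$.

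The genuine obstacle is therefore entirely in part (a): bookkeeping the $k$-dependence of the $z_i$. The reduction to first blocks isolates that dependence into the single prefactor $\tfrac{1}{(k-i)!}$ and converts the whole verification into the one factorial identity above, so I anticipate no conceptual difficulty beyond correctly tracking index ranges in the two inductions.
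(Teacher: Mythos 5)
Your proof is correct. Part (b) follows the paper's own argument essentially verbatim: apply $\bA^*=\be_1^T\otimes\bN^*+\bS^T\otimes I$ to each summand, let the $\bS^T\otimes I$ piece shift the index up by one, and recognize the collected $\be_1^T\otimes\bN^*$ pieces as $\bN^*\til{z}_{k+1}$ via the ($k$-independent) recurrence. For part (a) you take a genuinely different, and in my view cleaner, route. The paper runs a single induction on the full vector identity, carrying the $k$-dependent coefficients $\tfrac{(k-i+\ell)!}{(\ell-1)!(k-i)!}$ through the induction hypothesis and dismissing the final coefficient match with ``it can be seen that all $z_i$ are as stated'' --- a step that actually requires converting between the $a_i$ (satisfying the recurrence at level $k-1$) and the $z_i$ (at level $k$). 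You instead decouple the two mechanisms in $\bA$: the shift relation $c^{(k+1)}_{m+1}=\tfrac1m c^{(k)}_m$ collapses every block to a first block, $c^{(k)}_m=\tfrac{1}{(m-1)!}c^{(k-m+1)}_1$, the first blocks obey the clean $k$-independent recurrence $c^{(p)}_1=\sum_{\ell=1}^p\tfrac{1}{(\ell-1)!}N_\ell c^{(p-\ell)}_1$, and the stated $k$-dependent recurrence for $z_i=\tfrac{1}{(k-i)!}c^{(i)}_1$ then reduces to one factorial cancellation. What this buys is transparency: the awkward $k$-dependence of the $z_i$ is exposed as a pure rescaling artifact of the claimed normalization, and the closed form $z_i=\tfrac{1}{(k-i)!}c^{(i)}_1$ makes explicit the structural parallel with part (b) (where the analogous reduction is built into the representation $(\bS^T\otimes I)^{j-1}\bN^*\til{z}$). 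The paper's direct induction is shorter to state but leaves the coefficient verification implicit; yours fills exactly that gap.
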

%\begin{proof}
%For brevity the details of the proofs (using induction) are left out.
%\end{proof}
\begin{proof}
\begin{enumerate}
\item[(a)]
It is easily seen that the result holds for $k=1$, when $z_0 = x_1$ and $z_1= N_1z_{0}$.
Suppose the result holds for $k-1$,
thus \[\bA^{k-1}\bx = \sum_{j=1}^{k} (\be_j \otimes a_{k-j}),\]
where $a_0 = \tfrac{1}{(k-1)!}x_1$ and $a_i = \displaystyle\sum_{\ell=1}^{i} \tfrac{(k-i+\ell-1)!}{(\ell-1)!(k-i-1)!}N_{\ell}a_{i-\ell}$ for $i\in\{1,\ldots,k-1\}$.
Then, using \eqref{eq: Sj},
\begin{align*}
\bA^k\bx
&= \displaystyle\sum_{j=1}^k (\be_1\otimes \bN)(\be_j\otimes a_{k-j})
    + \displaystyle\sum_{j=1}^k (\bS \otimes I) (\be_j\otimes a_{k-j}) \\
&= \displaystyle\sum_{j=1}^k (\be_1\otimes N_ja_{k-j})
    + \displaystyle\sum_{j=1}^k \tfrac{(j-1)!}{j!}(\be_{j+1}\otimes a_{k-j}) \\
&= \displaystyle (\be_1\otimes \sum_{j=1}^k N_ja_{k-j})
    + \displaystyle\sum_{j=2}^{k+1} (\be_{j}\otimes \tfrac{1}{(j-1)} a_{k-j+1}).
\end{align*}
Defining $z_k = \displaystyle\sum_{j=1}^k N_ja_{k-j}$ and $z_{k-j+1} = \tfrac{1}{(j-1)} a_{k-j+1}$,
it can be seen that all $z_i$ are as stated in $(a)$. This shows (a) by induction.

\item[(b)]
It is easily seen that for $k=1$ the result holds, where $\til{z}_0 = \til{y}_1$ and  $\til{z}_1= N_1\til{z}_{0}$.
Suppose the proposition holds for $k-1$. Then
\begin{align*}
(\bA^*)^k\til{\by}
&= \displaystyle\sum_{j=1}^k (\be_1^T \otimes \bN^*)(\bS^T \otimes I)^{j-1} \bN^* z_{k-j}
    + \sum_{j=1}^k(\bS^T \otimes I)^{j}\bN^*\til{z}_{k-j} \\
&= \displaystyle\sum_{j=1}^k \tfrac{1}{(j-1)!}(\be_j^T \otimes \bN^*) \bN^* \til{z}_{k-j}
    + \sum_{j=2}^{k+1} (\bS^T \otimes I)^{j-1}\bN^*\til{z}_{k-j+1} \\
&= \displaystyle \bN^* \sum_{j=1}^{k} \tfrac{1}{(j-1)!} N_j^* z_{k-j}
    + \sum_{j=2}^{k+1}(\bS^T \otimes I)^{j-1}\bN^* \til{z}_{k-j+1} \\
&= \sum_{j=1}^{k+1}(\bS^T \otimes I)^{j-1}\bN^* \til{z}_{k-j+1},
\end{align*}
where $\til{z}_0 = \til{y}_1$, $\til{z}_1$, \dots, $\til{z}_m$ are
as stated under (b). This proves (b) by induction.
\end{enumerate}
\end{proof}
As we have seen in Theorems \ref{thm:righteigvec} and \ref{thm:lefteigvec},
the right and left eigenvectors of interest have the form \eqref{eq:bv_equiv} and \eqref{eq:bw_equiv}, respectively.
Proposition~\ref{prop:krylovstruc} has shown that by choosing starting vectors $\bx=\be_1\otimes x_1$
and $\til{\by}=\bN^* \til{y}_1$ the vectors that span the Krylov subspaces are of the form
\begin{subequations}\label{eq:inf vectors}
\begin{align}
  \ba       &= \sum_{j=1}^{k_a} (\be_j\otimes a_j),       \label{eq:ba} \\
  \til{\ba} &= \sum_{j=1}^{k_{\til{a}}}(\bS^T \otimes I)^{j-1}\bN^* \til{a}_j,   \label{eq:tba}
\end{align}
\end{subequations}
respectively.
Also linear combinations of vectors from the same Krylov subspaces,
and therefore also the approximate eigenvectors, will be of this form.
We will distinguish the two types of vectors \eqref{eq:ba} and \eqref{eq:tba} by a tilde.
These vectors can be seen as a truncated version of the vectors in \eqref{eq:bv_equiv} and \eqref{eq:bw_equiv}.
Vectors of the form \eqref{eq:ba} have a finite number of nonzeros and therefore storing only
the nonzero entries gives a finite representation of the vector of infinite length, i.e.,
by storing the vectors  $a_j$, for $j = 1,\ldots, k_{a}$.
The vectors of infinite length of type \eqref{eq:tba} can also be stored with a finite number
of vectors in $\mathbb{C}^n$, namely by storing the vectors  $\til{a}_j$, for $j = 1,\ldots, k_{\til{a}}$.

\subsection{Scalar products and matrix-vector products}
The previously introduced types of infinite-dimensional vectors, \eqref{eq:ba} and \eqref{eq:tba},
will be used in the algorithm for the infinite-dimensional problem.
Various operations involving these types of vectors of infinite length,
such as scalar products and matrix-vector products,
have to be adapted to the infinite-dimensional case.
First we introduce two different scalar products.
\begin{lemma} Suppose $\ba,\bb\in\CC^\infty$ are two vectors of type
$\eqref{eq:ba}$ given by $\displaystyle \ba=\sum_{j=1}^{k_a} (\be_j\otimes a_j)$
and $\displaystyle \bb=\sum_{j=1}^{k_b} (\be_j\otimes b_j)$.
Then, 
\begin{equation}
\ba^*\bb=\displaystyle\sum_{j=1}^{\min(k_a,k_b)} a_j^*b_j.
\end{equation}
\end{lemma}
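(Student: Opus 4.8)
The plan is to exploit the block structure encoded by the Kronecker products together with the orthonormality of the infinite basis vectors $\be_j$. Since each of $\ba$ and $\bb$ has only finitely many nonzero $n$-blocks, the whole computation is effectively finite-dimensional and no question of convergence or of interchanging an inner product with an infinite sum arises. First I would simply expand the scalar product into a finite double sum,
\[
\ba^*\bb = \Big(\sum_{i=1}^{k_a}(\be_i\otimes a_i)\Big)^*\Big(\sum_{j=1}^{k_b}(\be_j\otimes b_j)\Big)
= \sum_{i=1}^{k_a}\sum_{j=1}^{k_b}(\be_i\otimes a_i)^*(\be_j\otimes b_j).
\]

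Next I would apply the standard properties of the Kronecker product. Using $(\be_i\otimes a_i)^*=\be_i^*\otimes a_i^*$ together with the mixed-product rule $(\be_i^*\otimes a_i^*)(\be_j\otimes b_j)=(\be_i^*\be_j)\otimes(a_i^*b_j)$, each term collapses to the scalar $(\be_i^*\be_j)(a_i^*b_j)$. The decisive ingredient is then the orthonormality of the standard basis vectors of infinite length, $\be_i^*\be_j=\delta_{ij}$, which annihilates every off-diagonal term and reduces the double sum to the diagonal contribution $\sum_j a_j^*b_j$, where $j$ runs only over those indices for which both an $a_j$ and a $b_j$ are present.

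I expect there to be essentially no obstacle here; the statement follows from bookkeeping once the Kronecker identities are in place. The only point requiring a line of care is the range of summation when $k_a\neq k_b$: the diagonal terms $\be_j^*\be_j=1$ survive, but for indices $j>\min(k_a,k_b)$ one of the two blocks is absent (equivalently, paired against a zero block), so such terms contribute nothing. This is exactly what produces the upper limit $\min(k_a,k_b)$ in the claimed identity.
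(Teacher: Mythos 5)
Your argument is correct and is exactly the elementary bookkeeping the paper has in mind: its own proof consists of the single sentence that the identity ``follows straightforwardly from the definition of the vectors.'' Your expansion via the mixed-product rule and $\be_i^*\be_j=\delta_{ij}$ simply makes that one-line claim explicit, including the correct handling of the $\min(k_a,k_b)$ upper limit.
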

\begin{proof}
This follows straightforwardly from the definition of the vectors.
\end{proof}

Another scalar product used in the bi-Lanczos algorithm
is a product of vectors of type \eqref{eq:ba} and \eqref{eq:tba}.
It can be computed efficiently in infinite dimensions as explained in the next proposition.
\begin{theorem} Suppose $\til{\ba},\bb\in\CC^\infty$ are of type
\eqref{eq:tba} and \eqref{eq:ba}, respectively, given by
$\displaystyle \til{\ba} = \sum_{j=1}^{k_{\til{a}}} (\bS^T \otimes I)^{j-1}\bN^*\til{a}_j$
and $\displaystyle \bb = \sum_{\ell=1}^{k_b} (\be_{\ell}\otimes b_\ell)$.
Then,
%\vspace{-3mm}
\begin{equation}\label{eq:leftright}
\til{\ba}^*\bb=\sum_{j=1}^{k_{\til{a}}}\sum_{\ell=1}^{k_b}
\tfrac{(\ell-1)!}{(j+\ell-2)!}\til{a}_j^*N_{j+\ell-1} b_\ell.
\end{equation}
\end{theorem}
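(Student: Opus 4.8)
The plan is to reduce the infinite-dimensional pairing to a finite double sum by taking the conjugate transpose of $\til{\ba}$ and exploiting the finite support of $\bb$. First I would write
\[
\til{\ba}^* = \sum_{j=1}^{k_{\til{a}}} \til{a}_j^* \, \bN \, (\bS\otimes I)^{j-1},
\]
using that $(\bN^*)^*=\bN$ and that $\bS$ has real entries, so that $((\bS^T\otimes I)^{j-1})^* = (\bS\otimes I)^{j-1}$. Distributing this against $\bb=\sum_{\ell=1}^{k_b}(\be_\ell\otimes b_\ell)$ turns $\til{\ba}^*\bb$ into a finite double sum over $j$ and $\ell$, since $\bb$ has only $k_b$ nonzero blocks; no convergence question arises, even though each summand of $\til{\ba}$ itself has infinitely many nonzero blocks.

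Next I would evaluate each term $\til{a}_j^* \, \bN \, (\bS\otimes I)^{j-1}(\be_\ell\otimes b_\ell)$. The mixed-product rule for the Kronecker product gives $(\bS\otimes I)^{j-1}(\be_\ell\otimes b_\ell)=(\bS^{j-1}\be_\ell)\otimes b_\ell$, and then I apply \eqref{eq: Sj} with the substitution (exponent) $j\mapsto j-1$ and (index) $k\mapsto\ell$ to obtain $\bS^{j-1}\be_\ell=\tfrac{(\ell-1)!}{(j+\ell-2)!}\be_{j+\ell-1}$. Thus the inner vector becomes $\tfrac{(\ell-1)!}{(j+\ell-2)!}(\be_{j+\ell-1}\otimes b_\ell)$.

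Finally I would use that $\bN=[\,N_1\ N_2\ \cdots\,]$ acts on a single block by selecting the corresponding coefficient, i.e. $\bN(\be_m\otimes b_\ell)=N_m b_\ell$; with $m=j+\ell-1$ this produces $N_{j+\ell-1}b_\ell$. Collecting the scalar factor gives exactly $\tfrac{(\ell-1)!}{(j+\ell-2)!}\til{a}_j^* N_{j+\ell-1} b_\ell$ for the $(j,\ell)$ term, and summing over $j$ and $\ell$ yields \eqref{eq:leftright}.

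I do not expect a genuine obstacle here. The only points to watch are the index shift when invoking \eqref{eq: Sj}, where an off-by-one in the factorials would be easy to introduce, and the justification that pairing $\til{\ba}$ against the finitely supported $\bb$ truncates everything to a finite sum, which is what makes the formal transpose-and-distribute manipulation rigorous rather than merely formal.
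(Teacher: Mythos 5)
Your proposal is correct and follows essentially the same route as the paper: expand the double sum, take the conjugate transpose of each term $(\bS^T\otimes I)^{j-1}\bN^*\til{a}_j$, use the Kronecker mixed-product rule together with \eqref{eq: Sj} to get $\bS^{j-1}\be_\ell=\tfrac{(\ell-1)!}{(j+\ell-2)!}\be_{j+\ell-1}$, and let $\bN$ pick out the block $N_{j+\ell-1}$. The index arithmetic matches the paper's computation exactly; you merely spell out the steps the paper leaves implicit.
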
\label{prop:leftright_ip}
\begin{proof}
%The proof follows easily from the definition of the infinite vectors and rearrangement of sums. It is omitted for brevity.
This can be derived directly via the following equality
\[ \til{\ba}^*\bb
= \displaystyle\sum_{j=1}^{k_{\til{a}}} \sum_{\ell=1}^{k_b} \Big((\bS^T \otimes I)^{j-1}\bN^*\til{a}_j\Big)^*
                          (\be_{\ell}\otimes b_{\ell})
= \displaystyle\sum_{j=1}^{k_{\til{a}}} \sum_{\ell=1}^{k_b} \til{a}_j^*\bN
                          \Big(\be_{j+\ell-1} \otimes \tfrac{(\ell-1)!}{(j+\ell-2)!} b_{\ell}\Big).
\]
\end{proof}

To translate the finite dimensional matrix-vector multiplication to the infinite-dimensional case
two variants of matrix-vector products have to be investigated,
one with the matrix $\bA$ and a vector of type \eqref{eq:ba},
and one with the matrix $\bA^*$ and a vector of type \eqref{eq:tba}.
\begin{theorem}[Action of $\bA$] 
\label{thm:Aaction}
Suppose $\ba\in\CC^\infty$ is of type \eqref{eq:ba} given by
$\displaystyle \ba=\sum_{j=1}^{k_a} (\be_j\otimes a_j)$.
Then,
\vspace{-2mm}
\begin{equation}\label{eq:Aaction}
 \bA\ba=\sum_{j=1}^{k_a+1}(\be_j\otimes b_j),
\end{equation}
where 
\begin{equation}\label{eq:Aactionb}
b_j= \tfrac{1}{j-1}a_{j-1} \textrm{ for }j=2,\ldots,k_a+1\textrm{, and }
b_1= \displaystyle\sum_{j=1}^{k_a} N_j a_j.
\end{equation}
\end{theorem}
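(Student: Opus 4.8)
The plan is to compute $\bA\ba$ directly by exploiting the additive splitting $\bA=\be_1\otimes\bN+\bS\otimes I$ from \eqref{eq:bA_def}, together with linearity of the operator and the Kronecker mixed-product rule. Writing $\ba=\sum_{j=1}^{k_a}(\be_j\otimes a_j)$, I would act with the two summands of $\bA$ separately and then recombine the results block by block.

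First I would handle the term $(\bS\otimes I)\ba$. Using $(\bS\otimes I)(\be_j\otimes a_j)=(\bS\be_j)\otimes(Ia_j)$ together with the special case $j=1$ of the identity \eqref{eq: Sj}, namely $\bS\be_j=\tfrac{1}{j}\be_{j+1}$, each block is shifted down by one index and rescaled:
\begin{equation*}
(\bS\otimes I)\ba=\sum_{j=1}^{k_a}\tfrac{1}{j}\,(\be_{j+1}\otimes a_j).
\end{equation*}
Reindexing with $k=j+1$ turns this into $\sum_{k=2}^{k_a+1}\tfrac{1}{k-1}(\be_k\otimes a_{k-1})$, which already yields the blocks $b_k=\tfrac{1}{k-1}a_{k-1}$ for $k=2,\dots,k_a+1$ and accounts for the growth of the support by exactly one block.

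Next I would treat $(\be_1\otimes\bN)\ba$. Here the $n\times\infty$ block row $\bN=[\,N_1\ N_2\ \cdots\,]$ contracts the entire vector into the first block only, so that $(\be_1\otimes\bN)\ba=\be_1\otimes(\bN\ba)=\be_1\otimes\sum_{j=1}^{k_a}N_j a_j$, and the whole contribution lands in block $1$ as $b_1=\sum_{j=1}^{k_a}N_j a_j$. Adding the two contributions and collecting by block index gives \eqref{eq:Aaction}--\eqref{eq:Aactionb}.

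The computation is essentially routine, so the only point requiring care is the non-square Kronecker structure of $\be_1\otimes\bN$: since $\bN$ is a block row rather than a square matrix, the mixed-product rule does not apply verbatim, and one must instead read $\be_1\otimes\bN$ as the operator placing $\bN\ba$ in the first block and zero elsewhere. Verifying that this reading is consistent with the block definition of $\bA$ underlying \eqref{eq:first_infdim_eig0}, and confirming that no block beyond index $k_a+1$ is populated, are the small checks I would make explicit; everything else follows from the subdiagonal action of $\bS$.
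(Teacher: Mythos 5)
Your proof is correct and follows essentially the same route as the paper: the paper's own justification defers to the computation in the proof of Proposition \ref{prop:krylovstruc}(a), which is exactly your splitting $\bA=\be_1\otimes\bN+\bS\otimes I$ applied termwise with the identity \eqref{eq: Sj}. The only cosmetic difference is that you carry out the single application of $\bA$ directly (correctly noting that the block-row $\bN$ contracts everything into the first block), whereas the paper phrases it as the base computation inside an induction over powers of $\bA$; no substantive gap either way.
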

\begin{proof}
This can be proven with induction. The computation is analogous to the one needed in the proof of Proposition \ref{prop:krylovstruc}$(a)$.
\end{proof}

\begin{theorem}[Action of $\bA^*$]
\label{thm:Ataction}
Suppose $\til{\ba}\in\CC^\infty$ is of type \eqref{eq:tba} given by
$\displaystyle \til{\ba}=\sum_{j=1}^{k_{\til{a}}} (\bS^T \otimes I)^{j-1}\bN^*\til{a}_j$.
Then,
\vspace{-2mm}
\begin{equation}\label{eq:Ataction}
  \bA^*\til{\ba}=\sum_{j=1}^{k_{\til{a}}+1} (\bS^T \otimes I)^{j-1}\bN^*\til{b}_j,
\end{equation}
where 
  \begin{equation} \label{eq:Atactionb}
\til{b}_j = \til{a}_{j-1}\textrm{ for }j=2,\ldots k_{\til{a}}+1\textrm{, and }
\til{b}_1 = \displaystyle\sum_{j=1}^{k_{\til{a}}}\tfrac{1}{(j-1)!}N_j^*\til{a}_j.    
  \end{equation}
\end{theorem}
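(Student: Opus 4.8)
The plan is to compute $\bA^*\til{\ba}$ directly by splitting $\bA^*$ into its two Kronecker summands and exploiting the shift identity \eqref{eq: Sj}, in complete analogy with the inductive step already carried out in the proof of Proposition~\ref{prop:krylovstruc}(b). Taking the conjugate transpose of \eqref{eq:bA_def} and using that $\bS$ and $I$ have real entries gives $\bA^*=\be_1^T\otimes\bN^*+\bS^T\otimes I$. Applying this to $\til{\ba}=\sum_{j=1}^{k_{\til{a}}}(\bS^T\otimes I)^{j-1}\bN^*\til{a}_j$ and distributing over the finite sum produces two contributions, which I would treat separately: the \emph{superdiagonal term} coming from $\bS^T\otimes I$ and the \emph{first-block term} coming from $\be_1^T\otimes\bN^*$. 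Because everything here is a finite sum of well-defined infinite vectors, the argument is purely algebraic and no convergence or domain considerations enter.

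For the superdiagonal term, the mixed-product property of the Kronecker product gives $(\bS^T\otimes I)(\bS^T\otimes I)^{j-1}=(\bS^T\otimes I)^{j}$, so this contribution equals $\sum_{j=1}^{k_{\til{a}}}(\bS^T\otimes I)^{j}\bN^*\til{a}_j$. Relabelling the summation index so that the power of $\bS^T\otimes I$ reads $j-1$ turns this into $\sum_{j=2}^{k_{\til{a}}+1}(\bS^T\otimes I)^{j-1}\bN^*\til{a}_{j-1}$, which is already of the form required by \eqref{eq:Ataction} with $\til{b}_j=\til{a}_{j-1}$ for $j=2,\ldots,k_{\til{a}}+1$. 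This step is routine.

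The first-block term is where the real work lies. Using $(\bS^T\otimes I)^{j-1}=(\bS^T)^{j-1}\otimes I$ together with the mixed-product property yields $(\be_1^T\otimes\bN^*)(\bS^T\otimes I)^{j-1}=(\be_1^T(\bS^T)^{j-1})\otimes\bN^*$, and since $\be_1^T(\bS^T)^{j-1}=(\bS^{j-1}\be_1)^T$, relation \eqref{eq: Sj} with $k=1$ gives $\bS^{j-1}\be_1=\tfrac{1}{(j-1)!}\be_j$; hence $(\be_1^T\otimes\bN^*)(\bS^T\otimes I)^{j-1}=\tfrac{1}{(j-1)!}(\be_j^T\otimes\bN^*)$. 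It then remains to simplify $(\be_j^T\otimes\bN^*)\bN^*\til{a}_j$. Writing $\bN^*\til{a}_j=\sum_k\be_k\otimes(N_k^*\til{a}_j)$ as its block decomposition and applying the mixed-product property once more, the factor $\be_j^T\be_k=\delta_{jk}$ selects a single block, so $(\be_j^T\otimes\bN^*)\bN^*\til{a}_j=\bN^*N_j^*\til{a}_j$. Summing over $j$ collects the first-block term into $\bN^*\sum_{j=1}^{k_{\til{a}}}\tfrac{1}{(j-1)!}N_j^*\til{a}_j=(\bS^T\otimes I)^{0}\bN^*\til{b}_1$, with $\til{b}_1$ exactly as in \eqref{eq:Atactionb}.

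Adding the two contributions gives \eqref{eq:Ataction}. The only delicate point is the Kronecker-product bookkeeping in the first-block term, in particular the identity $(\be_j^T\otimes\bN^*)\bN^*\til{a}_j=\bN^*N_j^*\til{a}_j$, which is precisely the ``pull $\bN^*$ to the front'' manipulation already used in the proof of Theorem~\ref{thm:lefteigvec}(ii) and which relies on the block-selection property $(\be_j^T\otimes I)\bN^*=N_j^*$; everything else is elementary rearrangement of finite sums.
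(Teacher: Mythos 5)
Your proof is correct and follows exactly the computation the paper invokes: it splits $\bA^*=\be_1^T\otimes\bN^*+\bS^T\otimes I$, handles the shift term by reindexing, and collapses the first-block term via \eqref{eq: Sj} and $(\be_j^T\otimes I)\bN^*=N_j^*$, which is precisely the calculation in the proof of Proposition~\ref{prop:krylovstruc}(b) to which the paper's one-line proof refers. No discrepancies.
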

\begin{proof}
Analogous to the computation in the proof of Proposition \ref{prop:krylovstruc}$(b)$.
\end{proof}

\subsection{The infinite bi-Lanczos method}\label{sec:infbilanczos}
In Algorithm 2 we present the bi-Lanczos algorithm for the infinite-dimensional problem. 
It is set up analogously to the algorithm of the standard bi-Lanczos method: 
all line numbers are corresponding. 
As we have seen, every vector of infinite length can be represented 
by a finite number of vectors of length $n$.
In the algorithm these vectors of infinite length are denoted by matrices 
whose columns correspond to the length-$n$ vectors representing 
the infinite-dimensional vector.
The index of the matrices in the new algorithm indicate 
the number of columns of the matrix,
i.e., $R_k\in\CC^{n\times k}$, and we denote the $\ell$th column of $R_k$ by $R_{k,\ell}$,
i.e., $R_k=[R_{k,1},\ldots,R_{k,k}]$.

We now describe certain steps of the algorithm illustrating that the
algorithm can be implemented with matrices of finite size. This first
description can be considerable improved by reducing the number of necessary
linear solves as we shall explain in Section~\ref{sec:representation}.
\begin{enumerate}
\item[(1)+(2)] In the first two lines two matrix-vector multiplications are executed, the first with the infinite-dimensional matrix $\bA$ and a vector of type \eqref{eq:ba}, the second with the infinite-dimensional matrix $\bA^*$ and a vector of type \eqref{eq:tba}. In Theorems \ref{thm:Aaction} and \ref{thm:Ataction} it is shown how these actions are performed. A vector represented by $k$ vectors of length $n$ results after a multiplication with $\bA$ or $\bA^*$ in a vector represented by $k+1$ vectors of length $n$.
More precisely, the action \eqref{eq:Aaction} and \eqref{eq:Ataction} can be computed
with the formulas
\begin{subequations}\label{eq:Aaction_comp}
  \begin{eqnarray}  
   b_j&=& \tfrac{1}{j-1}a_{j-1} \textrm{ for }j=2,\ldots,k_a+1\\
   b_1&=& -M(0)^{-1}\displaystyle\sum_{j=1}^{k_a} \frac{1}{j}M^{(j)}(0)a_j,
\end{eqnarray}
\end{subequations}
and
\begin{subequations}\label{eq:Ataction_comp}
  \begin{eqnarray}  
\til{b}_j &=& \til{a}_{j-1}\textrm{ for }j=2,\ldots k_{\til{a}}+1\\
\til{b}_1 &=& -\displaystyle\sum_{j=1}^{k_{\til{a}}}M^{(j)}(0)^*\left(\tfrac{1}{j!}M(0)^{-*}\til{a}_j\right).    \label{eq:Ataction_comp_b}
  \end{eqnarray}
\end{subequations}
At first sight \eqref{eq:Ataction_comp} appears  
to require a ridiculous  number of linear solves. We show 
 in Section~\ref{sec:representation} how these linear solves can be avoided by using
a particular implicit representation of $\til{a}_j$.
\item[(3)+(4)] The new vectors are orthogonalized against a previous vector. Linear combinations of a vector of the form \eqref{eq:ba} (or \eqref{eq:tba}) are again of that form, and thus can be represented as such. The new vectors are represented by $k+1$ vectors of length $n$, while the previous vectors are represented by $k-1$ length-$n$ vectors. To enable the summation we add two zero columns to the $n\times (k-1)$-matrices representing the previous vectors. 
\item[(5)+(8)] The coefficients computed in this step are needed for the orthogonalization of the vectors, and furthermore they are the entries of the tridiagonal matrix $T_k$. The computation of these coefficients involves an inner product between a vector of type \eqref{eq:ba} and one of type \eqref{eq:tba}, and is executed as described in Proposition~\ref{prop:leftright_ip}. 
More specificially the theory in Theorem~\ref{prop:leftright_ip} is in our setting specialized to the explicit
formulas
\begin{subequations}\label{eq:leftright_comp}
\begin{eqnarray} 
   \til{\ba}^*\bb&=&-\sum_{j=1}^{k_{\til{a}}} \til{a}_j^* M(0)^{-1} \left(\sum_{\ell=1}^{k_b}  M^{(j+\ell-1)}(0) \tfrac{(\ell-1)!}{(j+\ell-1)!}b_\ell\right)\\
&=&-\sum_{j=1}^{k_{\til{a}}} (M(0)^{-*}\til{a}_j)^*  \left(\sum_{\ell=1}^{k_b}  M^{(j+\ell-1)}(0) \tfrac{(\ell-1)!}{(j+\ell-1)!}b_\ell\right) \label{eq:leftright_comp_b}
\end{eqnarray}
\end{subequations}
Similar to formula \eqref{eq:Ataction_comp}, we show how to reduce the number of linear solves
in Section~\ref{sec:representation}.
\item[(6)+(7)] These orthogonalization steps are comparable to those in $(3)+(4)$. Since the vectors are orthogonalized against the previous vector, one column of zeros is added to allow for the summation.
\item[(15)] An important property of this new method is that the computation of the approximate eigenvectors for the solution of \eqref{eq:nep} entails the storage of (only) $k$ vectors of length $n$ for each subspace.
To clarify this, recall from Section~\ref{sec:standardBiLan} that from
the eigentriplet $(\theta_1^{(k)}, z_1^{(k)}, \til{z}_1^{(k)})$ of $T_k$
we can deduce an approximate eigentriplet $
(\theta_1^{(k)}, Q_kz_1^{(k)}, \til{Q}_k\til{z}_1^{(k)})$ for $\bA$.
The approximate right eigenpair $(\theta_1^{(k)}, Q_kz_1^{(k)})$ approximates thus a right eigenpair of $\bA$ that has the form $(\lambda, \bv)$, where $\bv = \left[\tfrac{\lambda^{j-1}}{(j-1)!} \, x\right]_{j=1}^\infty$ (see \eqref{eq:bv_equiv}). 
From this approximate pair of $\bA$, we are able to extract an approximate solution to $\eqref{eq:nepa}$. 
Note that the columns of $Q_k$ represent vectors of type \eqref{eq:ba}
and thus a linear combination of the columns is itself
a representation of a vector of this type.
Suppose $s_r$ stands for the first $n$-length block of $Q_kz_1^{(k)}$. Then $s_r$ is an approximation to $x$, the first length-$n$ block of $\bv$, and thus, by Theorem~\ref{thm:righteigvec} $((\theta_1^{(k)})^{-1}, s_r)$ is an approximate solution to $\eqref{eq:nepa}$.

Similarly, the left eigenpair $(\theta_1^{(k)}, \til{Q}_k\til{z}_1^{(k)})$ approximates a left eigenpair of $\bA$ that has the form $(\lambda, \bw)$, where $\bw=\sum_{j=1}^\infty (\bS^T\otimes I)^{j-1}\bN^*\lambda^j z$ (see \eqref{eq:bw_equiv}). 
Again, we can deduce an approximate solution to \eqref{eq:nepb} from this approximate pair of $\bA$. 
The columns of $\til{Q}_k$ represent vectors of type \eqref{eq:tba} such that a linear combination of the columns is itself
a representation of a vector of this type.
Suppose the first $n$-length block of $\til{Q}_k\til{z}_1^{(k)}$ is  called $s_{\ell}$. By Theorem~\ref{thm:lefteigvec} we know that $s_{\ell}$ is an approximation to $\lambda z = \lambda M(0)^*y$. Hence $((\theta_1^{(k)})^{-1}, \theta_1^{(k)}M(0)^{-*}s_{\ell})$ is an approximate solution to \eqref{eq:nepb}.

To recover the approximate eigenvectors of \eqref{eq:nep} we do not have to store the entire matrices $Q_k$ and $\til{Q}_k$. As we have just shown, the storage of (only) $k$ vectors of length $n$ for each subspace is sufficient. 
\end{enumerate}
\begin{table}[htbp!]
{\footnotesize
\noindent \vrule height 0pt depth 0.5pt width \textwidth \\
{\bfseries Algorithm 2:} Infinite bi-Lanczos \\[-3mm]
\vrule height 0pt depth 0.3pt width \textwidth \\
{\bf Input: } Vectors $q_1,\til{q}_1\in\CC^n$, with $\til{q}_1^*\!M'(0)q_1 = 1$, $P_0 = \til{P}_0 = [ \phantom{0}]$, $P_1 = [q_1]$, $\til{P}_1 = [\til{q}_1]$, $\gamma_1 = \beta_1 = 0$. \\
{\bf Output: } Approximate eigentriplets $((\theta_i^{(k)})^{-1}, x_i^{(k)}, y_i^{(k)})$ to nonlinear eigenvalue problem $(\ref{eq:nep})$.
\\[-3mm]
\vrule height 0pt depth 0.3pt width \textwidth \\[1mm]
{\bf for }$k=1,2, \ldots,$ until convergence\\
\phantom{M1} (1) \quad Compute $R_{k+1}:=[b_1,\ldots,b_{k+1}]\in\CC^{n\times (k+1)}$ with \eqref{eq:Aaction_comp} where, $k_a=k$,   \\
\phantom{M1  (1)} \quad $a_\ell=P_{k,\ell}$ for $\ell=1,\ldots,k$.\\
\phantom{M1} (2) \quad Compute $\til{R}_{k+1}:=[\til{b}_1,\ldots,\til{b}_{k+1}]\in\CC^{n\times (k+1)}$ with \eqref{eq:Ataction_comp} where, $k_{\til{a}}=k$,   \\
\phantom{M1  (2)} \quad $\til{a}_\ell=\til{P}_{k,\ell}$ for $\ell=1,\ldots,k$.\\
\phantom{M1} (3) \quad $R_{k+1} = R_{k+1} - \gamma_k[P_{k-1},0,0]$\\
\phantom{M1} (4) \quad $\til{R}_{k+1} = \til{R}_{k+1} - \bar{\beta}_{k}[\til{P}_{k-1},0,0]$ \\
\phantom{M1} (5) \quad Compute $\alpha_k=\til{\ba}^*\bb$ with \eqref{eq:leftright_comp}
where
$\til{a}_\ell=\til{P}_{k,\ell}$, $\ell=1,\ldots,k$, and \\
\phantom{M1  (5)} \quad  $b_\ell=R_{k+1,\ell}$ for $\ell=1,\ldots,k+1$ and $k_{\til{a}}=k$ and $k_b=k+1$.\\
\phantom{M1} (6) \quad $R_{k+1} = R_{k+1} - \alpha_k[P_{k},0]$\\
\phantom{M1} (7) \quad $\til{R}_{k+1} = \til{R}_{k+1} - \bar{\alpha}_{k}[\til{P}_{k},0]$ \\
\phantom{M1} (8) \quad Compute $\omega_k = \bar{\til{\ba}^*\bb}$ with \eqref{eq:leftright_comp}
where $\til{a}_\ell=\til{R}_{k+1,\ell}$, $b_\ell=R_{k+1,\ell}$ \\
\phantom{M1 (8)} \quad for $\ell=1,\ldots,k+1$, where $k_{\til{a}}=k_b=k+1$.\\
\phantom{M1} (9) \quad $\beta_{k+1} = |\omega_k|^{1/2}$\\
\phantom{M} (10) \quad $\gamma_{k+1} = \bar{\omega}_k / \beta_{k+1}$\\
\phantom{M} (11) \quad $P_{k+1} = R_{k+1} / \beta_{k+1}$\\
\phantom{M} (12) \quad $\til{P}_{k+1} = \til{R}_{k+1} / \bar{\gamma}_{k+1}$\\
\phantom{M} (13) \quad Compute eigentriplets $(\theta_i^{(k)}, z_i^{(k)}, \til{z}_i^{(k)})$ of $T_k$. \\
\phantom{M} (14) \quad Test for convergence. \\
%\phantom{M} (15) \quad Rebiorthogonalize if necessary. \\
{\bf end}\\
(15) Compute approximate eigenvectors $x_i^{(k)}$ and $y_i^{(k)}$.\\[-2mm]
\vrule height 0pt depth 0.5pt width \textwidth
}
\end{table}

\subsection{Computational representation of the infinite vectors} \label{sec:representation}
%\ej{needs some work to incorporate different representation of $\ba$ } 
The algorithm described in the previous subsection is complete in the sense
that it shows how one can carry out the two-sided Lanczos for the infinite matrix $\bA$ in
finite-dimensional arithmetic. However, it needs
several modifications to become a practical algorithm. Most importantly, 
by inspection of  \eqref{eq:Ataction_comp} and
\eqref{eq:leftright_comp} we directly conclude that it 
requires  a large number of linear solves corresponding to $M(0)^{-1}$ and $M(0)^{-*}$. 
With a change of variables we  now show how the number of linear solves per step can be reduced to two linear solves
per iteration by a particular representation of $\til{\ba}$ and $\til{\bb}$.

The choice of representation is motivated by 
 the fact that the computational formulas involving the vectors $\til{a}_j$ appear 
in combination with a linear solve with $M(0)^{-*}$, in particular in formulas \eqref{eq:Ataction_comp_b} and \eqref{eq:leftright_comp_b}. 
% Firstly, we recall the vector $\til{a}$ of type \eqref{eq:tba} and rewrite its equation as
This property is also naturally expected from the fact that any infinite vector of type \eqref{eq:tba} can 
be factorized as
\begin{align*}
\til{\ba} &= \displaystyle\sum_{j=1}^{k_{\til{a}}}(\bS^T \otimes I)^{j-1}\bN^* \til{a}_j \\
		  &= \displaystyle - \sum_{j=1}^{k_{\til{a}}}(\bS^T \otimes I)^{j-1}\
		  \begin{bmatrix}
		  M'(0)\! & \!\tfrac{1}{2}M^{(2)}(0)\! & \!\ldots \end{bmatrix}^*M(0)^{-*} \til{a}_j.
\end{align*}
Instead of storing the $k_{\til{a}}$ vectors $\til{a}_j$ that represent the infinite vector $\til{\ba}$,
and storing the $k_{\til{b}}$ vectors $\til{b}_j$ that represent the infinite vector $\til{\bb}$,
 we store the vectors  
\begin{subequations}\label{eq:tilde_comp}
\begin{eqnarray}
\til{a}_j^{\comp}&:=&M(0)^{-*} \til{a}_j,\textrm{ for }j=1,\ldots,k_{\til{a}}\\
\til{b}_j^{\comp}&:=&M(0)^{-*} \til{b}_j,\textrm{ for }j=1,\ldots,k_{\til{b}}.
\end{eqnarray}
\end{subequations}
The superscript \emph{\comp} is used to indicate that this vector is the representation which is used
in the computation.

Some additional efficiency can be achieved by also modifying the representation
of $\ba$ and $\bb$. Instead of representing these vectors with $a_j$, $j=1,\ldots,k_a$ 
and $b_j$, $j=1,\ldots,k_b$, we set
\begin{subequations}\label{eq:tilde2_comp}
\begin{eqnarray}
a_j^{\comp}&:=&(j-1)! a_j,\textrm{ for }j=1,\ldots,k_{\til{a}}\\
b_j^{\comp}&:=&(j-1)! b_j,\textrm{ for }j=1,\ldots,k_{\til{b}}.
\end{eqnarray}
\end{subequations}
This reduces the number of scalar operations and simplifies the implementation.

The substitutions \eqref{eq:tilde_comp} and \eqref{eq:tilde2_comp} 
translate
the steps of the algorithm as follows.
Since the substitution is linear and changes both the representation
of $\til{\ba}$ and of $\ba$, the operations associated with Step~(1), (2), (5) and (8) need
to be modified.

% In practice this means that for storing the starting vector of this type we need to do one solve with $M(0)^{*}$ at the start. Thereafter, (only) one solve with $M(0)^{*}$ is executed for every new vector we obtain after a matrix-vector multiplication with a vector of this type. To see this, we recall from Proposition \ref{thm:Ataction} the computation of the vector $\til{b}_1$, and rewrite this as 
%In Step~(2) where we need to apply \eqref{eq:Ataction_comp} we see that 
%\[
%\til{b}_1 = \displaystyle\sum_{j=1}^{k_{\til{a}}}\tfrac{1}{(j-1)!}N_j^*\til{a}_j = \displaystyle\sum_{j=1}^{k_{\til{a}}}\tfrac{-1}{j!}M^{(j)}(0)^*\til{a}_j^{\comp}. 
%\]

The substitution \eqref{eq:tilde2_comp} changes the operations
associated with the action of $\bA$ in Step (1). Instead of \eqref{eq:Aaction_comp}
we use
\begin{subequations}\label{eq:Aaction_comp2}
  \begin{eqnarray}  
   b_j^{\comp}&=& a_{j-1}^{\comp} \textrm{ for }j=2,\ldots,k_a+1\label{eq:Aaction_comp2_a}\\
   b_1^{\comp}&=& -M(0)^{-1}\displaystyle\sum_{j=1}^{k_a} M^{(j)}(0)\tfrac{1}{j!}a_j^{\comp}.\label{eq:Aaction_comp2_b}
\end{eqnarray}
\end{subequations}
The reason for this substitution is that \eqref{eq:Aaction_comp2_a} can now
 be computed without any operations on the vectors,
and that \eqref{eq:Aaction_comp2_b} is completely analogous to \eqref{eq:Ataction_comp2_b} with a complex conjugate transpose.

We need to compute the action of $\bA^*$ by using \eqref{eq:Ataction_comp} in Step~(2).
The substitution corresponding to the
representation \eqref{eq:tilde_comp} into 
 \eqref{eq:Ataction_comp} leads to the formulas 
\begin{subequations}\label{eq:Ataction_comp2}
  \begin{eqnarray}  
\til{b}_j^{\comp} &=& \til{a}_{j-1}^{\comp}\textrm{ for }j=2,\ldots k_{\til{a}}+1\\
  \til{b}_1^{\comp} &=& M(0)^{-*}\til{b}_1=
-M(0)^{-*}
\displaystyle\sum_{j=1}^{k_{\til{a}}}M^{(j)}(0)^*\tfrac{1}{j!}\til{a}_j^{\comp} \label{eq:Ataction_comp2_b}
%\til{b}_1 &=& \displaystyle\sum_{j=1}^{k_{\til{a}}}M^{(j)}(0)^*\left(\tfrac{1}{(j-1)!}M(0)^{-*}\til{a}_j\right).    
  \end{eqnarray}
\end{subequations}
%\begin{subequations} \label{eq:Ataction_comp2}
%\begin{eqnarray}
%  &=&   \\
%\end{eqnarray}
%\end{subequations}
Note that in contrast to \eqref{eq:Ataction_comp}, \eqref{eq:Ataction_comp2} only involves one linear solve.

We need to compute the scalar product of infinite vectors 
in Step~(5) and (8). Instead of using \eqref{eq:leftright_comp},
we can now reformulate formula \eqref{eq:leftright_comp} with the new representation as 
%\begin{subequations}\label{eq:leftright_comp2}
\begin{eqnarray} 
   \til{\ba}^*\bb&=&%-\sum_{j=1}^{k_{\til{a}}} \til{a}_j^T M(0)^{-1} \left(\sum_{\ell=1}^{k_b}  M^{(j+\ell-1)}(0) \tfrac{(\ell-1)!}{(j+\ell-1)!}b_\ell\right)\\
%&=&
-\sum_{j=1}^{k_{\til{a}}} (M(0)^{-*}\til{a}_j)^*  \left(\sum_{\ell=1}^{k_b}  M^{(j+\ell-1)}(0) \tfrac{(\ell-1)!}{(j+\ell-1)!}b_\ell\right)\notag\\
&=&
-\sum_{j=1}^{k_{\til{a}}} (\til{a}_j^{\comp})^*  \sum_{\ell=1}^{k_b}  M^{(j+\ell-1)}(0) \tfrac{1}{(j+\ell-1)!}b_\ell^{\comp}
 \label{eq:leftright_comp2}
\end{eqnarray}
%\end{subequations}
This formula does not require any linear solve, which should be seen
in contrast to \eqref{eq:leftright_comp} which requires $k_{\til{a}}$ linear solves.
Despite this improvement, we will see in the following section and numerical examples
that the computation of
the scalar product in \eqref{eq:leftright_comp2} is often the dominating part of the algorithm.

\subsection{Complexity considerations and implementation}\label{sec:complexity}

%Because of the short recurrences, $k$ steps of the algorithm require the storage of only six vectors of length $\le kn$, three for each subspace. However, biorthogonality may be lost already after a reasonably small number of iterations unless (re-)orthogonalization with respect to all previous vectors is done. In our numerical experiments we show results for the case in which only the short recurences are used. 

%\subsubsection{\ej{Extraction of eigenpair approximations}}\label{sec:extraction}
The computational resources and problem specific aspects for the algorithm can be summarized as follows. 
The description below is based on the representation in Section~\ref{sec:representation}.
Again our discussion is conducted with references to the steps of the algorithm. We neglect the computation associated with the scalars in step (9) and (10).
\begin{enumerate}
\item[(1)+(2)] In the representation of Section~\ref{sec:representation} 
we need to evaluate \eqref{eq:Ataction_comp2} and \eqref{eq:Aaction_comp2}. 
The main computational effort of evaluating these formulas consists of 
first computing a linear combination of derivatives, in the sense that we need to
call the functions 
    \begin{subequations}\label{eq:lincombs}
    \begin{eqnarray}
      \operatorname{lincomp}(z_1,\ldots,z_m)&=&\sum_{i=1}^m M^{(i)}(0)z_i   \label{eq:lincomb}\\
      \operatorname{lincombstar}(\til{z}_1,\ldots,\til{z}_m) &=&\sum_{i=1}^m M^{(i)}(0)^*\til{z}_i. \label{eq:lincombstar}
    \end{eqnarray}
    \end{subequations}
   The output of the functions $\operatorname{lincomp}(\cdot)$ and $\operatorname{lincombstar}(\cdot)$
are used for a linear solve associated with $M(0)$ and $M(0)^*$. Note that $M(0)$ and $M(0)^*$ are
not changed throughout the iteration such that for many large and sparse eigenvalue problems 
efficiency improvements can be achieved by computing an LU-factorization before the iteration starts.
\item[(3)+(4)] These steps consist of simple operations on a full matrix of size $n\times k$ and are in general not computationally demanding. The same holds for steps (6)+(7) and (11)+(12) of the algorithm. 
\item[(8)+(9)] The scalar products are computed with \eqref{eq:leftright_comp2}. Note that one part of that formula is a linear combination of derivatives, such that it can be computed by 
calling the function defined in \eqref{eq:lincomb} $k_{\til{a}}$ times, i.e., $\operatorname{lincomb}(\cdot)$. Since, both $k_a$ and $k_{\til{a}}$ increase in every iteration, the double sum in \eqref{eq:leftright_comp2}
accumulates after $k$ steps to a total complexity 
\begin{equation}\label{eq:tscalarprod}
t_{\rm scalarprod}(k,n)=\mathcal{O}(k^3n).
\end{equation}
  \item[(13)] This step consists of computing an eigentriplet of a $k\times k$ tridiagonal matrix, which in general is not a computationally dominating part of the algorithm. 
\end{enumerate}

We conclude that in order to apply our algorithm to a specific problem the
user needs to provide a function to solve linear systems corresponding to $M(0)$ and 
$M(0)^*$ and a procedure to compute linear combinations of derivatives
as defined in  \eqref{eq:lincombs}. This can be seen in relation to IAR \cite{Jarlebring:2012:INFARNOLDI} 
and TIAR \cite{Jarlebring:2015:WTIARTR} where 
the user needs to provide a function to carry out linear solves corresponding to $M(0)$ and
compute linear combinations as in \eqref{eq:lincomb}.

%\subsubsection{\ej{Complexity considerations}}
%More insight in the computational complexity, we explain in more detail how the various steps are implemented.
%\ej{* A discussion leading up to the conclusion *:} 
%
%%Similarly, for the matrix-vector multiplication as described in \eqref{eq:leftright_comp} 
%%(derived from Proposition~\ref{thm:Aaction}), one solve with $M(0)$ is executed when computing the vector $b_1$:
%%\[
%%b_1= \displaystyle\sum_{j=1}^{k_a} N_j a_j = -M(0)^{-1} \sum_{j=1}^{k_a} \tfrac{1}{j} M^{(j)}(0) a_j.
%%\]
%Summarizing, per iteration two linear systems of equations must be solved, in lines $(1)$ and $(2)$, which is done using a pre-computed LU-factorization.
%In the same two steps, the linear combinations 
%$\sum_{j=1}^{k_a}\tfrac{1}{j}M^{(j)}(0) \, a_j$, and $\sum_{j=1}^{k_{\til{a}}}\tfrac{1}{j!}M^{(j)}(0)^* \, \til{a}_j$
%are computed.
%
%
%The dominating component in terms of computational complexity of the algorithm corresponds to the scalar products
%of type \eqref{eq:leftright} in lines $(5)$ and $(8)$ of the algorithm.
%This particular scalar product involves a double sum and leads to a total complexity $\mathcal{O}(nk^3)$,
%
%
%\subsubsection{\ej{Implementation of scalar products}}

\begin{remark}[Scalar product complexity and improvement]\label{rem:doublesum}\rm
Both IAR and TIAR have a complexity (in terms of number of floating point operations) of $\mathcal{O}(k^3n)$, although TIAR is in general considerably faster in practice. 
Due to the scalar product complexity  \eqref{eq:tscalarprod} our algorithm also has
a computational complexity $\mathcal{O}(k^3n)$. However, it turns out that
the scalar product computation can be improved in a problem specific case. 

To ease the notation let us collect the vectors 
$\tilde{a}_j^{\comp}$ for $j=1,\ldots,k_{\til{a}}$ in $\tilde{A}\in \RR^{n\times k_{\til{a}}}$ and 
$b_\ell^{\comp}$ for $\ell=1,\ldots,k_{b}$ in $B\in \RR^{n\times k_{b}}$ (which is how the vectors
are stored in Algorithm~2). 
Moreover, without loss of generality we decompose the NEP as
a sum of products of matrices and scalar functions
%\ej{ Elias can work out more precisely for which problems the trick can be applied: }
\[
  M(\lambda)=M_1f_1(\lambda)+\cdots+M_pf_p(\lambda),
\]
where $f_1,\ldots,f_p$ are analytic functions.
Although the assumption is not a restriction of generality, the following
approach is only efficient if $p$ is small. This is the case for many
NEPs, e.g., those in Section~\ref{sec:numexpes}.
The scalar product \eqref{eq:leftright_comp2} is
now
\begin{eqnarray}
   \til{\ba}^*\bb&=&
-\sum_{j=1}^{k_{\til{a}}}\sum_{\ell=1}^{k_b}  \sum_{k=1}^p (\til{a}_j^{\comp})^*   M_kf_k^{(j+\ell-1)}(0) \tfrac{1}{(j+\ell-1)!}b_\ell^{\comp}\notag\\
&=&-\sum_{j=1}^{k_{\til{a}}}\sum_{\ell=1}^{k_b}  \sum_{k=1}^p (\til{a}_j^{\comp})^*   M_kb_\ell^{\comp} \tfrac{1}{(j+\ell-1)!}f_k^{(j+\ell-1)}(0)\notag\\
&=&-\sum_{j=1}^{k_{\til{a}}}\sum_{\ell=1}^{k_b}  \sum_{k=1}^p \hat{M}_{k,j,\ell} \tfrac{1}{(j+\ell-1)!}f_k^{(j+\ell-1)}(0)\label{eq:leftright_comp3}
\end{eqnarray}
where 
\begin{equation}\label{eq:Mhat}
  \hat{M}_k:=AM_kB,\textrm{ for }k=1,\ldots,p.
\end{equation}
The matrices $\hat{M}_1,\ldots,\hat{M}_p$ can be computed before computing the sum.
In this fashion the last line of \eqref{eq:leftright_comp3} is independent of the size 
of the problem $n$. Moreover, the sum in \eqref{eq:leftright_comp3}
can be carried out by appropriate matrix vector products.
This reformulation of the step changes the accumulated computation
time complexity of the scalar product to
\[
  \tilde{t}_{\rm scalarprod}=\mathcal{O}(pk^3)+\mathcal{O}(npk^2),
\]
under the assumption that $M_kB$ is carried out in $\mathcal{O}(nk_b)$. 
In comparison to \eqref{eq:tscalarprod}, this approach is advantageous if $p$ is small and $n$ is large. 
Moreover, in practice the advantage of this appears quite large since
on modern computer architectures matrix-matrix
products are more efficient than (unoptimized) double sums, due to more efficient usage of
CPU-cache. 

%{\color{red} but for some applications, e.g., the ones considered in Section~\ref{sec:numexpes}, it can be computed by using
%matrix-matrix products (for relatively small matrices) reducing the cost significantly. [I am not sure what is meant here. We don't do this at the moment, right? Do we want to implement this? Then we should also give extra explanation (or better: in any case we should add some info, I left this still open for now).]} 
%With similar reasoning it can be seen that there are no solves with $M(0)$ or $M(0)^*$ involved in the computation of this inner product, since we work with the vectors $\til{a}_j^{\comp}$. 
\end{remark}
\section{Numerical experiments}\label{sec:numexpes}
Our approach is intended for large and sparse problems, and we illustrate
problem properties by solving two nonlinear problems. The simulations
were carried out with an implementation in MATLAB, 
and using a computer with an Intel Core i5-3360M processor and 8 GB of RAM.
In order to increase reproducability 
of our results we have made the MATLAB-codes freely available online, 
in a way that completely regenerates 
the figures in the following simulations.\footnote{The MATLAB codes are available: \url{http://www.math.kth.se/~eliasj/src/infbilanczos/}}

\subsection{A second order delay-differential equation}
We start  with the illustration of the
%by illustrating the 
properties and competitiveness of the algorithm by computing solutions to an artificial large-scale NEP
stemming from a second order delay-differential equation,
%We illustrate the competitiveness of the algorithm
%by computing solutions to an artificial large-scale NEP.
%To characterize properties of our approach we apply the algorithm to a problem
%stemming from a second order delay-differential equation,
\begin{equation}\label{eq:example1}
M(\lambda)=-\lambda^2I+A_0+e^{-\lambda}A_1,
\end{equation}
where $A_0$ and $A_1$ are randomly generated sparse matrices with
normally distributed random entries.
Solutions to \eqref{eq:example1} can for instance be used 
to study stability of time-delay system. See \cite{Michiels:2007:STABILITYBOOK} for further literature on time-delay systems.
For the experiments we choose the matrices to be of dimension $n = 1000$.
The total number of iterations is equal to $k = 50$.
\par\vspace{\baselineskip}
\begin{center}
  \begin{minipage}{\textwidth}
  \begin{minipage}[b]{0.49\textwidth}
    \centering
    \includegraphics[width=0.95\textwidth]{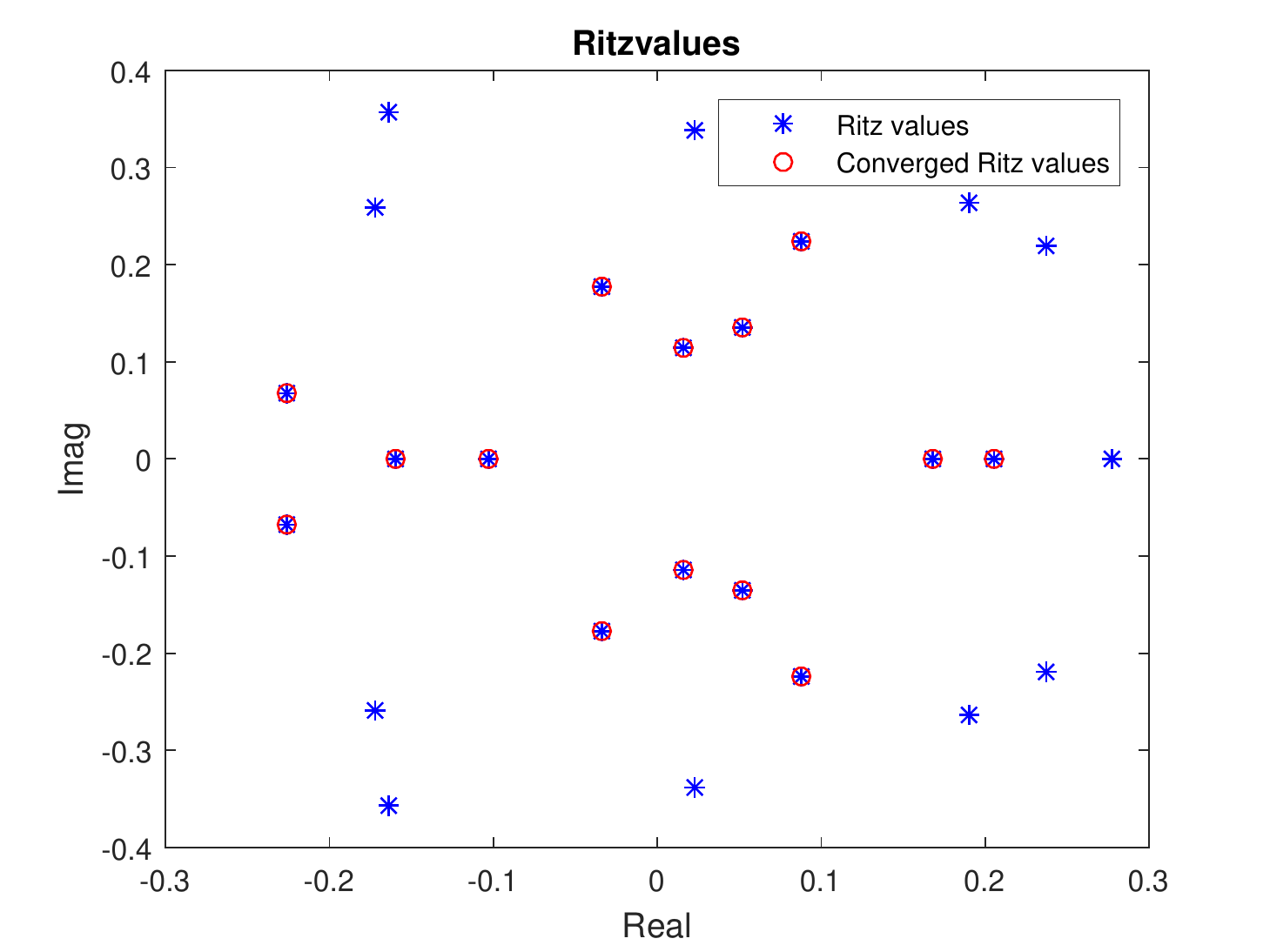}
    \captionof{figure}{Eigenvalue approximations of the infinite bi-Lanczos method
    applied to problem \eqref{eq:example1}. Circles correspond to
    approximations that have converged after $k=50$.}\label{fig:1}
  \end{minipage}
  \hfill
  \begin{minipage}[b]{0.49\textwidth}
    \centering
    \begin{tabular}{ccc}
    \hline \rule{0pt}{2.7ex}%
      $i$   & $|\theta_i^{(k_0)}|^{-1}$    & $\kappa((\theta_i^{(k_0)})^{-1})$  \\
    \hline \rule{0pt}{2.3ex}
      1     &$ 1.029\cdot 10^{-1} $     & $1.267\cdot 10^{3}  $\\
      2     &$ 1.157\cdot 10^{-1} $     & $2.510\cdot 10^{3}  $\\
      3     &$ 1.157\cdot 10^{-1} $     & $2.510\cdot 10^{3}  $\\
      4     &$ 1.440\cdot 10^{-1} $     & $1.697\cdot 10^{3}  $\\
      5     &$ 1.440\cdot 10^{-1} $     & $1.697\cdot 10^{3}  $\\
      6     &$ 1.593\cdot 10^{-1} $     & $1.846\cdot 10^{3}  $\\
      7     &$ 1.593\cdot 10^{-1} $     & $1.925\cdot 10^{3}  $\\
      8     &$ 1.803\cdot 10^{-1} $     & $7.315\cdot 10^{2}  $\\
      9     &$ 1.803\cdot 10^{-1} $     & $7.315\cdot 10^{2}  $\\ \hline
    \end{tabular}
      \captionof{table}{The condition numbers for the nine converged eigenvalues closest to zero. The values are computed using the approximate eigentriplets after $k=50$ iterations.}\label{tab:1}
    \end{minipage}
  \end{minipage}
\end{center}
%Let $\theta$ indicate an approximate eigenvalue, $\til{x}$ an approximate right eigenvector and $\til{y}$ an approximate left eigenvector.
Figure~\ref{fig:1} shows the approximated eigenvalues, and distinguishes the ones converged after $k=50$ iterations by a circle around them, which are obviously the ones closest to zero.
The two-sided approach has the advantage that during the process a condition number estimate is available, enabling the user to define a satisfying convergence criterion.
The condition numbers shown in Table~\ref{tab:1} correspond to the converged eigenvalues and can be computed as (cf. \cite{Tisseur:2000:BACKWARD})%Condition number for eigenvalue:
\[
\kappa(\lambda,M) := \frac{\alpha \|x\|_2\|y\|_2}{ |\lambda| \left|y^*M'(\lambda)x\right|}
= \frac{\left(|\lambda|^2 \|I\|_2 + \|A_0\|_2 + |e^{-\lambda}|\|A_1\|_2\right) \|x\|_2\|y\|_2}{ |\lambda| \left|y^*(-2\lambda I - e^{-\lambda}A_1)x\right|}.
\]
We also compare the infinite bi-Lanczos method to the infinite Arnoldi method (IAR) as presented in \cite{Jarlebring:2012:INFARNOLDI}.
Figure~\ref{fig:2} shows for both methods the error in the eigenvalues against the iterations,
and Figure~\ref{fig:3} contains the error of both methods against the computation time in seconds.
The computing time illustration 
given in Figure~\ref{fig:3} depends highly on the computing environment. We
ran our simulations on several environments, including changing computer and MATLAB-version,
and observed a similar behavior in most simulations.
For the infinite bi-Lanczos method the Ritz values converge in fewer iterations,
and in general the first eigenvalue converged in less CPU-time.

\par\vspace{\baselineskip}
{
\begin{center}
  \begin{minipage}{\textwidth}
  \begin{minipage}[b]{0.49\textwidth}
    \centering
    \includegraphics[width=0.99\textwidth]{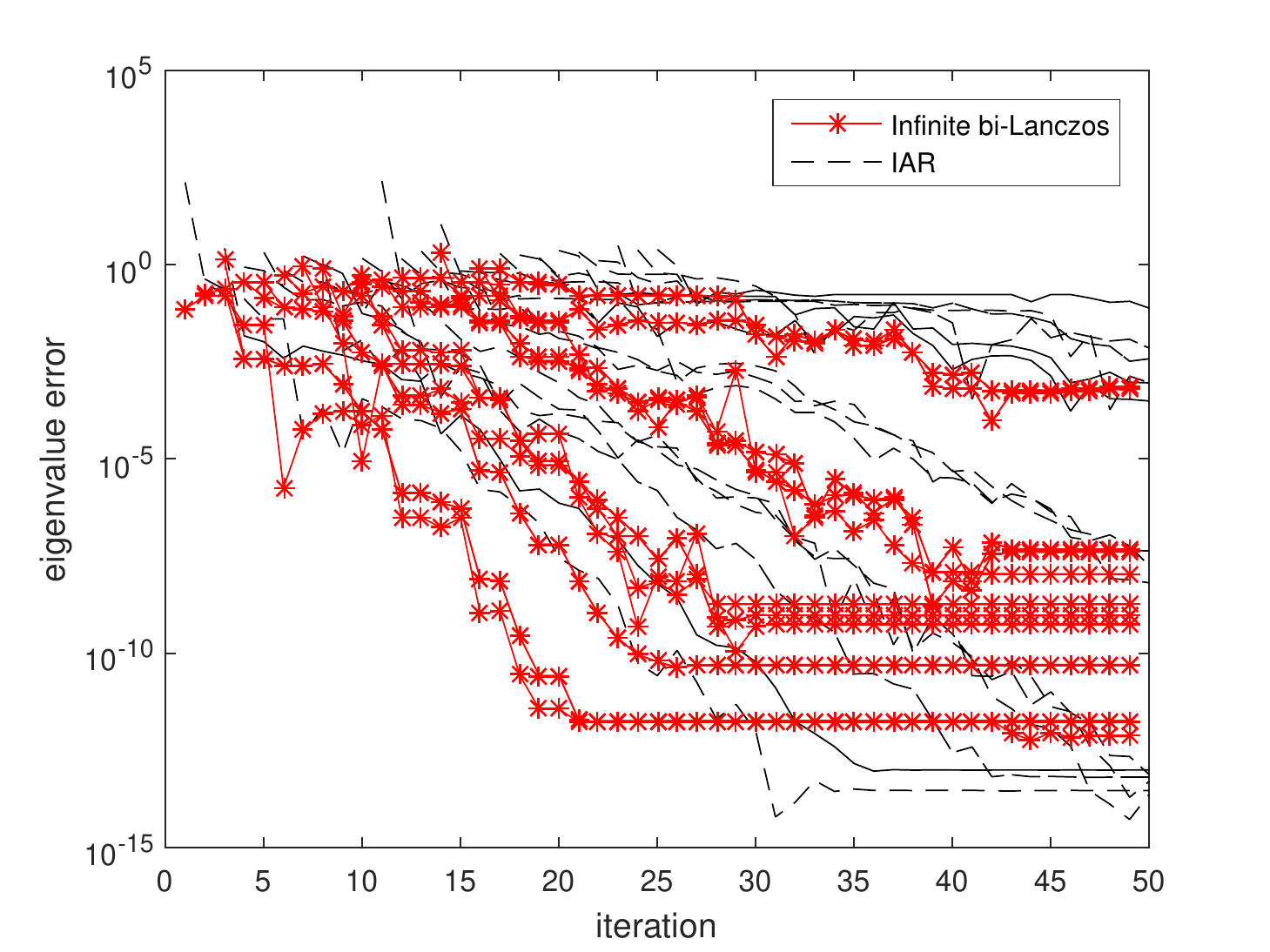}
    \captionof{figure}{Convergence diagram, eigenvalue error against the iterations.}\label{fig:2}
  \end{minipage}
  \hfill
  \begin{minipage}[b]{0.49\textwidth}
    \centering
    \includegraphics[width=0.99\textwidth]{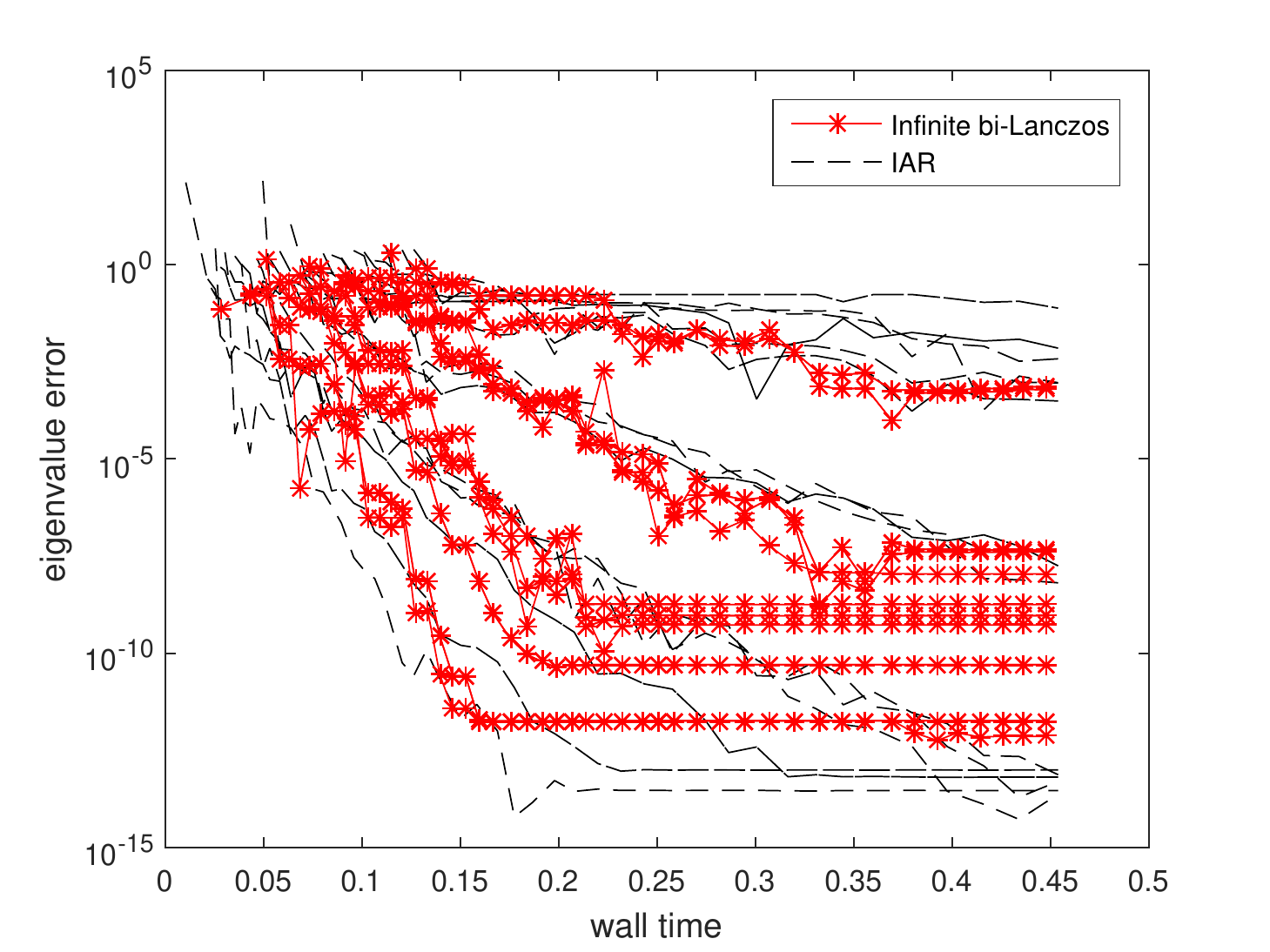}
    \captionof{figure}{Convergence diagram, eigenvalue error against the computation time (s).}\label{fig:3}
    \end{minipage}
  \end{minipage}
\end{center}
}
\noindent The faster convergence of the infinite Bi-Lanczos 
can be explained by the two subspaces that are build in the infinite bi-Lanczos.
In fact, with respect to one multiplication with $\bA$ per iteration of IAR, infinite bi-Lanczos contains per iteration a multiplication with both $\bA$ and $\bA^*$.
Because of the short recurrences the computation time of infinite bi-Lanczos can be kept decently low (and may even outperform IAR), as shown in Figure~\ref{fig:3}.
%One has to take into account that for growing dimensions infinite bi-Lanczos might become unstable, which here explains the stagnation of the approximation error in Figures~\ref{fig:2} and \ref{fig:3}. 
In contrast to IAR, the Bi-Lanczos procedure exhibits a stagnation in convergence. 
This is due to finite precision arithmetic. In general,
it is known that short-recurrence methods (such as the Lanczos method) are 
in general considered more sensitive 
to round-off errors than methods that use orthogonalization against all vectors in 
the basis matrix (such as the Arnoldi method).

Our implementation is based on using the computation of the scalar product as described in Remark~\ref{rem:doublesum}.
In order to illustrate the advantage of this optimization technique, 
we present computation time comparison in Table~\ref{tbl:scalarprodcomp}. Clearly, the advantage of the exploitation
of the technique Remark~\ref{rem:doublesum} has a large advantage in terms of computation time.

\begin{table}[h]
  \begin{center}
    \begin{tabular}{|c|c|c|c|c|}\hline
      &\multicolumn{2}{c|}{Using \eqref{eq:leftright_comp2}}&\multicolumn{2}{c|}{Using \eqref{eq:leftright_comp3}}  \\\hline
      &  Scal. prod. & Total & Scal. prod. & Total  \\
      \hline
      $k=10$ & $4.84\cdot 10^{-3}$ &  $8.23\cdot 10^{-2}$ & $1.58\cdot 10^{-3}$ &  $8.22\cdot 10^{-2}$\\
      $k=20$ & $9.52\cdot 10^{-3}$ &  $1.90\cdot 10^{-1}$ & $2.53\cdot 10^{-3}$ &  $1.44\cdot 10^{-1}$\\
      $k=30$ & $1.54\cdot 10^{-2}$ &  $3.67\cdot 10^{-1}$ & $3.85\cdot 10^{-3}$ &  $2.28\cdot 10^{-1}$\\
      $k=40$ & $2.11\cdot 10^{-2}$ &  $6.12\cdot 10^{-1}$ & $5.07\cdot 10^{-3}$ &  $3.24\cdot 10^{-1}$\\
      $k=50$ & $3.08\cdot 10^{-2}$ &  $9.46\cdot 10^{-1}$ & $6.08\cdot 10^{-3}$ &  $4.33\cdot 10^{-1}$\\
      $k=60$ & $4.26\cdot 10^{-2}$ &  $1.38\cdot 10^{0}$ & $7.79\cdot 10^{-3}$ &  $5.74\cdot 10^{-1}$\\
     \hline
    \end{tabular}
\caption{Computation time for the two different procedures to compute the scalar product. The timing show the accumulated CPU-time spent for $k$ iterations  in the algorithm (total) and in the computation of the scalar product (scal. prod.).
      \label{tbl:scalarprodcomp}
    }
  \end{center}
\end{table}

\subsection{A benchmark problem representing an electromagnetic cavity}
We now also consider the NEP presented in \cite{Liao:2010:NLRR} which is also
available in the collection \cite[Problem ``gun'']{Betcke:2013:NLEVPCOLL}. The problem
stems from the modelling of an electromagnetic cavity in an accelerator device.
The discretization of Maxwells equation with certain boundary conditions
leads to the NEP
\begin{equation} % \label{eq:}
  M(\lambda)=A_0-\lambda A_1+i\sqrt{\lambda}A_2+i\sqrt{\lambda-\sigma_2^2}A_3,
\end{equation}
where $\sigma_2=108.8774$. Before applying a numerical method, 
the problem is usually shifted and scaled. We set as in $\lambda=\lambda_0+\alpha\hat{\lambda}$
where $\lambda_0=300^2$ and $\alpha=(300^2-200^2)\cdot 10$. 
This problem has been solved with a number of
methods \cite{Jarlebring:2012:INFARNOLDI,Guttel:2014:NLEIGS,VanBeeumen:2015:CORK,VanBeeumen:2015:PHD}.  We use
it as a benchmark problem to illustrate the generality and valitidy of our approach.

The convergence of the infinite Bi-Lanczos method and IAR is visualized
in Figures~\ref{fig:4} and \ref{fig:5}. 
Unlike the previous example, the convergence of infinite Bi-Lanczos does not stagnate.
For this particular choice of shift infinite Bi-Lanczos is slightly more efficient than IAR.
We carried out experiments of the algorithms for several parameter
choices, and found nothing conclusive regarding which method is more efficient in general.
Hence, if the infinite Bi-Lanczos method is favorable if also left eigenvectors are
of interest.
Moreover, the TIAR is faster, but mathematically equivalent to IAR, but often faster. 
In the same computing environment, 20 steps of TIAR requires 0.48 seconds.
Note that TIAR uses a compact tensor representation of the basis, and
therefore belongs to a slightly different class of methods. See 
\cite{VanBeeumen:2015:CORK,Kressner:2014:TOAR,Zhang:2013:MOR} for related
methods based on compact representations in various settings.

\par\vspace{\baselineskip}
{
\begin{center}
  \begin{minipage}{\textwidth}
  \begin{minipage}[b]{0.49\textwidth}
    \centering
    \includegraphics[width=0.99\textwidth]{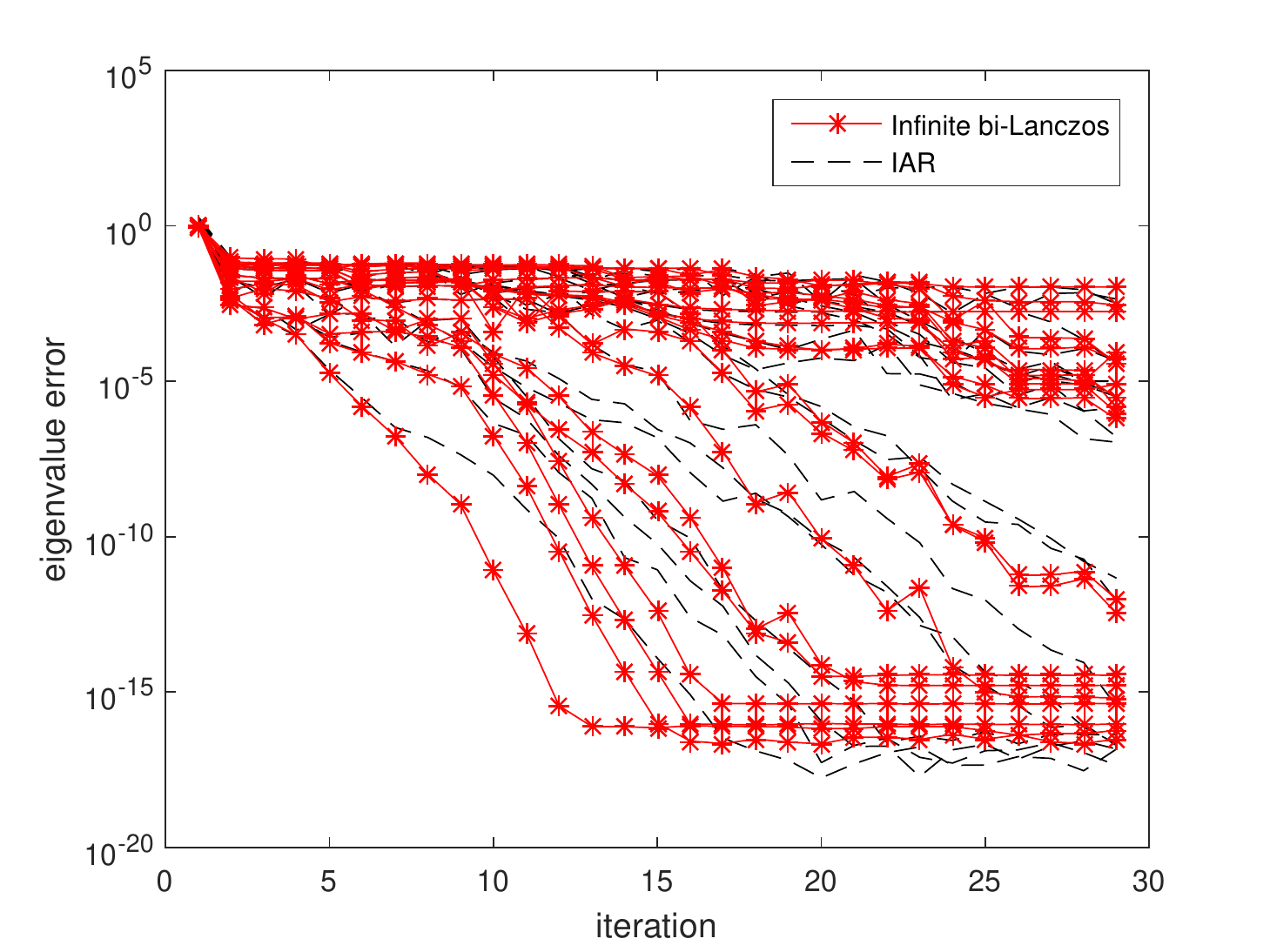}
    \captionof{figure}{Convergence diagram, eigenvalue error against the iterations.}\label{fig:4}
  \end{minipage}
  \hfill
  \begin{minipage}[b]{0.49\textwidth}
    \centering
    \includegraphics[width=0.99\textwidth]{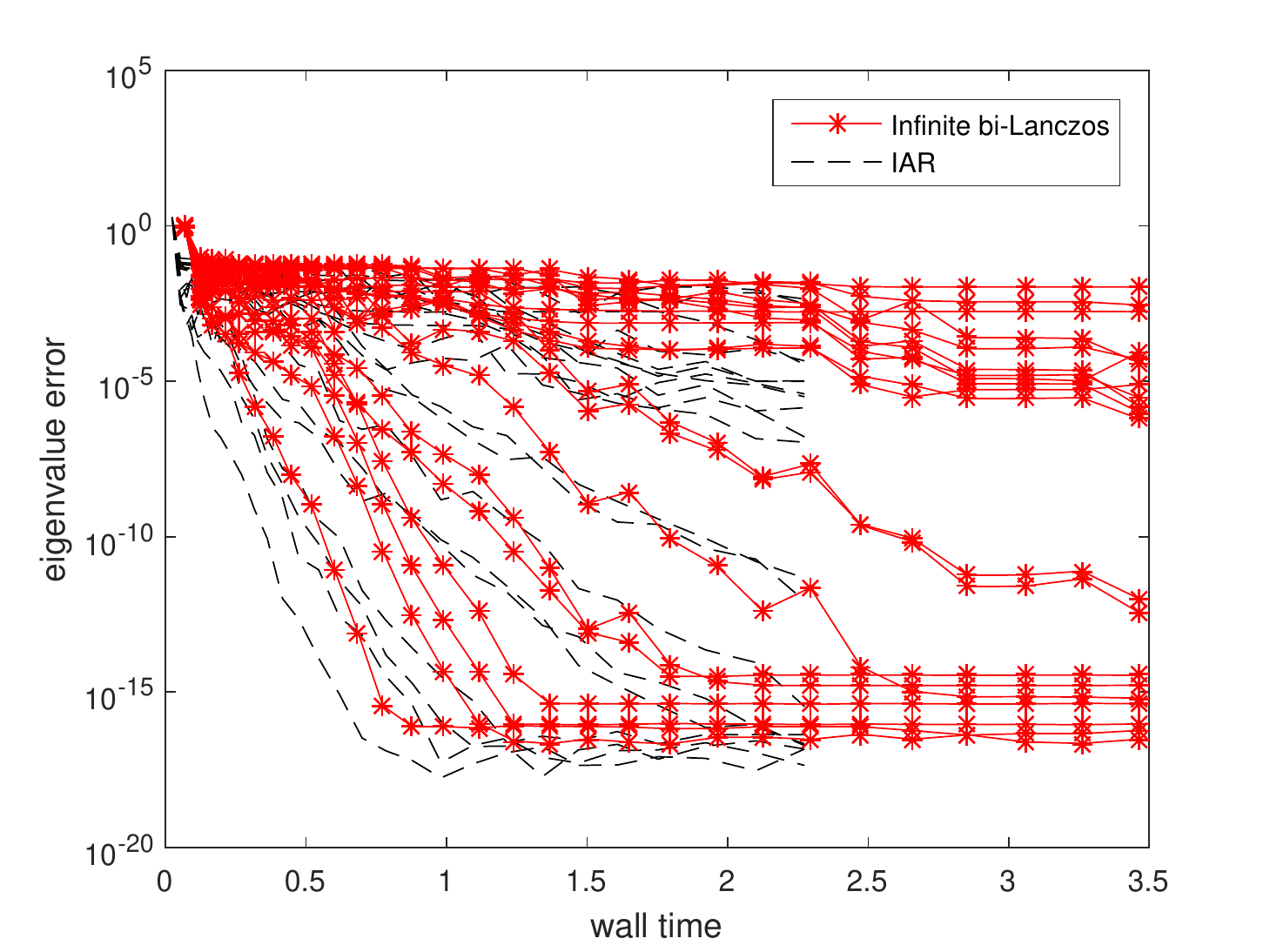}
    \captionof{figure}{Convergence diagram, eigenvalue error against the computation time (s).}\label{fig:5}
    \end{minipage}
  \end{minipage}
\end{center}
}

\section{Discussion, conclusions and outlook}
\label{sec:conclusion}
We have proposed a new two-sided Lanczos method for the nonlinear eigenvalue problem.
The method works implicitly with matrices and vectors with infinite size.
The new way of representing left type of infinite vectors is crucial to frame the two-sided method.
We intend to make the code adaptive, as the condition numbers which become available as the iterations proceed
may be used to define a satisfying convergence criterion.
We have seen that infinite bi-Lanczos can have faster convergence per iteration than the infinite Arnoldi method (IAR), which could be expected because in general two-sided methods have faster convergence (per iteration), and moreover, since infinite bi-Lanczos uses a low-term recurrence it has a lower orthogonalization cost per iteration than IAR. 

%* Relate to compact tensor representations 
%\ej{More advanced bi-Lanzcos stuff: Look-ahead discussion}
Several enhancements of IAR has been presented in the literature. Some of the developments appear to be extendable
to this Lanczos-setting, e.g., the tensor representation \cite{Jarlebring:2015:WTIARTR} and the restart techniques
\cite{Jarlebring:2014:SCHUR,Mele:2016:TIARRESTART}. Moreover, 
for certain problems it is known that a different version of IAR is more efficient, which can only be characterized
with an continuous operator (as in \cite{Jarlebring:2012:INFARNOLDI}). 
These type of adaptions are however somewhat involved due to the
fact that the left eigenvectors and the vectors representating
the left Krylov-subspace in the infinite bi-Lanczos setting is more complicated
than the right eigenvectors and subspace.

Although our approach is rigorously derived from an equivalence with the standard two-sided Lanczos method, we have provided no convergence theory. Convergence theory for standard two-sided Lanczos (for linear eigenvalue problems) is already quite involved and its specialization is certainly beyond the scope of the presented paper.
This also holds for other theory and procedures specifically designed for the standard method, such as the various possibilities to detect or overcome breakdowns (as mentioned in Section \ref{sec:standardBiLan}), and approaches to control the loss of biorthogonality (see 
\cite{Day:1997:Lanczos}). 
%As we mention in Section \ref{sec:standardBiLan}, the standard bi-Lanczos method suffers from the loss of bi-orthogonality in finite precision arithmetic. One can either accept the loss and take more steps, or, if desired, one can re-biorthogonalize all vectors in each iteration. A compromise between these two options is to maintain semiduality as proposed in 
%[{\color{red} Add reference: AN EFFICIENT IMPLEMENTATION OF THE NONSYMMETRIC
%LANCZOS ALGORITHM, by DAVID DAY, 1997}],
%meaning that vectors are re-biorthogonalised against all previous vectors only when the biorthogonality is lost. 
%The translation of the semiduality to the infinite-dimensional problem would need special attention, also because that analysis is based on a different scaling of the vectors (the basis vectors are scaled to have unit norm), and is beyond the scope of this paper. 

\section*{Acknowledgements}
We would like to thank Michiel Hochstenbach for carefully reading this paper and helpful comments on this work.
We also appreciate the encouragement of David Bindel to look at  Lanczos-type methods for nonlinear
eigenvalue problems. The second author gratefully acknowledges the support of the Swedish Research Council under Grant No.
621-2013-4640.

%{\color{red} 
%Two remarks from Michiel on the References: \\
%%(4), Jacobi-Davidson is with a double - -:Jacobi--Davidson. \\
%%(12), Is this work still not published?
%}
%
%\vspace{-3mm}
\bibliographystyle{plain}
%{\footnotesize
\bibliography{eliasbib,misc}

\begin{thebibliography}{10}

\bibitem{Bai:2000:TEMPLATES}
Z.~Bai, J.~Demmel, J.~Dongarra, A.~Ruhe, and H.~A. {v}an~{d}er Vorst.
\newblock {\em Templates for the Solution of Algebraic Eigenvalue Problems: A
  Practical Guide}.
\newblock SIAM, Philadelphia, 2000.

\bibitem{VanBeeumen:2015:PHD}
R.~Van Beeumen.
\newblock {\em Rational Krylov methods for nonlinear eigenvalue problems}.
\newblock PhD thesis, KU Leuven, 2015.

\bibitem{VanBeeumen:2013:RATIONAL}
R.~Van Beeumen, K.~Meerbergen, and W.~Michiels.
\newblock A rational {Krylov} method based on {Hermite} interpolation for
  nonlinear eigenvalue problems.
\newblock {\em SIAM J. Sci. Comput.}, 35(1):A327--A350, 2013.

\bibitem{VanBeeumen:2015:CORK}
R.~Van Beeumen, K.~Meerbergen, and W.~Michiels.
\newblock Compact rational {Krylov} methods for nonlinear eigenvalue problems.
\newblock {\em SIAM J. Sci. Comput.}, 36(2):820--838, 2015.

\bibitem{Betcke:2013:NLEVPCOLL}
T.~Betcke, N.~J. Higham, V.~Mehrmann, C.~Schr\"oder, and F.~Tisseur.
\newblock {NLEVP}: A collection of nonlinear eigenvalue problems.
\newblock {\em ACM Trans. Math. Softw.}, 39(2):1--28, 2013.

\bibitem{Betcke:2004:JD}
T.~Betcke and H.~Voss.
\newblock A {J}acobi--{D}avidson type projection method for nonlinear
  eigenvalue problems.
\newblock {\em Future Generation Computer Systems}, 20(3):363--372, 2004.

\bibitem{Day:1997:Lanczos}
David {Day}.
\newblock An efficient implementation of the nonsymmetric {Lanczos} algorithm.
\newblock {\em {SIAM J. Matrix Anal. Appl.}}, 18(3):566--589, 1997.

\bibitem{Effenberger:2013:ROBUST}
C.~Effenberger.
\newblock {\em Robust solution methods for nonlinear eigenvalue problems}.
\newblock PhD thesis, EPF Lausanne, 2013.

\bibitem{Freund:1993:Implementation}
Roland~W Freund, Martin~H Gutknecht, and No{\"e}l~M Nachtigal.
\newblock An implementation of the look-ahead {L}anczos algorithm for
  non-{H}ermitian matrices.
\newblock {\em SIAM J. Sci. Comput.}, 14(1):137--158, 1993.

\bibitem{Guttel:2014:NLEIGS}
S.~G\"uttel, R.~Van Beeumen, K.~Meerbergen, and W.~Michiels.
\newblock {NLEIGS:} a class of fully rational {Krylov} methods for nonlinear
  eigenvalue problems.
\newblock {\em SIAM J. Sci. Comput.}, 36(6):A2842--A2864, 2014.

\bibitem{Jarlebring:2014:SCHUR}
E.~Jarlebring, K.~Meerbergen, and W.~Michiels.
\newblock Computing a partial {Schur} factorization of nonlinear eigenvalue
  problems using the infinite {Arnoldi} method.
\newblock {\em SIAM J. Matrix Anal. Appl.}, 35(2):411--436, 2014.

\bibitem{Jarlebring:2015:WTIARTR}
E.~Jarlebring, G.~Mele, and O.~Runborg.
\newblock The waveguide eigenvalue problem and the tensor infinite {Arnoldi}
  method.
\newblock Technical report, KTH Royal Institute of Technology, 2015.
\newblock arxiv preprint.

\bibitem{Jarlebring:2012:INFARNOLDI}
E.~Jarlebring, W.~Michiels, and K.~Meerbergen.
\newblock A linear eigenvalue algorithm for the nonlinear eigenvalue problem.
\newblock {\em Numer. Math.}, 122(1):169--195, 2012.

\bibitem{Kressner:2009:BLOCKNEWTON}
D.~Kressner.
\newblock A block {N}ewton method for nonlinear eigenvalue problems.
\newblock {\em Numer. Math.}, 114(2):355--372, 2009.

\bibitem{Kressner:2014:TOAR}
D.~Kressner and J.~Roman.
\newblock Memory-efficient {Arnoldi} algorithms for linearizations of matrix
  polynomials in {Chebyshev} basis.
\newblock {\em Numer. Linear Algebra Appl.}, 21(4):569--588, 2014.

\bibitem{Liao:2010:NLRR}
B.-S. Liao, Z.~Bai, L.-Q. Lee, and K.~Ko.
\newblock Nonlinear {R}ayleigh-{R}itz iterative method for solving large scale
  nonlinear eigenvalue problems.
\newblock {\em Taiwanese Journal of Mathematics}, 14(3):869--883, 2010.

\bibitem{Mehrmann:2004:NLEVP}
V.~Mehrmann and H.~Voss.
\newblock Nonlinear eigenvalue problems: A challenge for modern eigenvalue
  methods.
\newblock {\em GAMM-Mitt.}, 27:121--152, 2004.

\bibitem{Mele:2016:TIARRESTART}
G.~Mele and E.~Jarlebring.
\newblock Restarting for the tensor infinite {Arnoldi} method.
\newblock Technical report, KTH Royal institute of technology, 2016.
\newblock arxiv:1606.08595.

\bibitem{Michiels:2007:STABILITYBOOK}
W.~Michiels and S.-I. Niculescu.
\newblock {\em Stability and Stabilization of Time-Delay Systems: An
  Eigenvalue-Based Approach}.
\newblock Advances in Design and Control 12. SIAM Publications, Philadelphia,
  2007.

\bibitem{Ruhe:1973:NLEVP}
A.~Ruhe.
\newblock Algorithms for the nonlinear eigenvalue problem.
\newblock {\em SIAM J. Numer. Anal.}, 10:674--689, 1973.

\bibitem{Schreiber:2008:PHD}
K.~Schreiber.
\newblock {\em Nonlinear Eigenvalue Problems: {N}ewton-type Methods and
  Nonlinear {R}ayleigh Functionals}.
\newblock PhD thesis, {TU} {B}erlin, 2008.

\bibitem{Szyld:2012:LOCAL}
D.~B. Szyld and F.~Xue.
\newblock Local convergence analysis of several inexact {Newton-type}
  algorithms for general nonlinear eigenvalue problems.
\newblock {\em Numer. Math.}, 123:333--362, 2012.

\bibitem{Szyld:2016:PRECOND_I}
D.~B. Szyld and F.~Xue.
\newblock Preconditioned eigensolvers for large-scale nonlinear {Hermitian}
  eigenproblems with variational characterizations. {I.} {E}xtreme eigenvalues.
\newblock {\em Mathematics of Computation}, 2015.
\newblock Accepted for publication.

\bibitem{Tisseur:2000:BACKWARD}
F.~Tisseur.
\newblock Backward error and condition of polynomial eigenvalue problems.
\newblock {\em Linear Algebra Appl.}, 309:339--361, 2000.

\bibitem{Voss:2004:ARNOLDI}
H.~Voss.
\newblock An {A}rnoldi method for nonlinear eigenvalue problems.
\newblock {\em BIT}, 44:387 -- 401, 2004.

\bibitem{Voss:2013:NEPCHAPTER}
H.~Voss.
\newblock Nonlinear eigenvalue problems.
\newblock In L.~Hogben, editor, {\em Handbook of Linear Algebra, Second
  Edition}, number 164 in Discrete Mathematics and Its Applications. Chapman
  and Hall/CRC, 2013.

\bibitem{Zhang:2013:MOR}
Y.~Zhang and Y.~Su.
\newblock A memory-efficient model order reduction for time-delay systems.
\newblock {\em BIT}, 53:1047--1073, 2013.

\end{thebibliography}
%}
\end{document}